\newtheorem{dummy}{Dummy}
\newtheorem{lemma}[dummy]{Lemma}
\newtheorem{theorem}[dummy]{Theorem}
\newtheorem{proposition}[dummy]{Proposition}
\newtheorem{corollary}[dummy]{Corollary}
\theoremstyle{definition}
\newtheorem{definition}{Definition}
\newtheorem{example}[dummy]{Example}
\newtheorem{remark}[dummy]{Remark}
\newcommand{\ignore}[1]{}
\date{14.7.2016}
\author{S. Pumpl\"un}
\email{susanne.pumpluen@nottingham.ac.uk}
\address{School of Mathematical Sciences\\
University of Nottingham\\
University Park\\
Nottingham NG7 2RD\\
United Kingdom
}
\keywords{Differential polynomial ring, skew polynomial, differential polynomial, differential operator,
 differential algebra, nonassociative division algebra.}
\subjclass[2010]{Primary: 17A35; Secondary: 17A60, 17A36}
\begin{document}

\title[Nonassociative differential extensions of characteristic $p$]
{Nonassociative differential extensions of characteristic $p$}

\begin{abstract} Let $F$ be a field of characteristic $p$. We define and investigate
nonassociative differential extensions of $F$ and of a central simple division algebra over $F$
and give a criterium for these algebras to be division. As special cases, we obtain classical results
for associative algebras by Amitsur and Jacobson.
We construct families of nonassociative division algebras which can be viewed as
 generalizations of associative cyclic extensions of a purely inseparable field extension of exponent one or a central division algebra.
 Division algebras which are nonassociative cyclic extensions of a purely inseparable field extension of exponent one
 are particularly easy to obtain.
\end{abstract}

\maketitle

%
\section*{Introduction}
%

Differential polynomial rings $D[t;\delta]$, where $D$ is a division algebra
over a field $F$ and $\delta$ a derivation on $D$, have been used successfully to construct associative
 central simple algebras.
These appear either as a quotient algebra $D[t;\delta]/(f)$ when factoring out a two-sided ideal
generated by a differential polynomial $f\in D[t;\delta]$, cf. \cite{Am2},  \cite{Hoe},
\cite[Sections 1.5, 1.8, 1.9]{J96},  or as the eigenring
of a differential polynomial $f$, e.g. see  \cite{Am}, \cite{O}. We can put these constructions into a
nonassociative context as follows:

Given $f\in D[t;\delta]$ of degree $m$, the set of all differential polynomials of degree less than $m$ can be
 canonically equipped with a nonassociative ring structure,
 using right division by $f$ to define the  multiplication $g\circ h=gh \,\,{\rm mod}_r f $.
The resulting nonassociative unital ring $S_f$
is an algebra over the field  $F_0=C(D)\cap {\rm Const}(\delta)$  (Petit \cite{P66}).
If $f$ is not two-sided (i.e., does not generate a two-sided ideal in $D[t;\delta]$) and $\delta$ is not trivial, then the $S_f$ are algebras whose nuclei are larger than their
center $F_0$. In particular, their right nucleus is the eigenring of $f$ employed in \cite{Am} and \cite{O}, whereas if
$f$ generates
 a two-sided ideal, then $S_f$ is the (associative) quotient algebra employed in  \cite{Am2} and \cite{J96},
 each time for well considered choices of $f$ and $D[t;\delta]$.

Let $F$ be a field of characteristic $p>0$. We study the  algebras $S_f$ containing
 a purely inseparable field extension  $K/F$ of exponent one or a central division algebra $D$
 over  $F$ as left nucleus.
  As a special case we reprove the classical results on differential extensions by  Jacobson \cite{J96}
  and Amitsur's results on noncommutative cyclic extensions of degree $p$ \cite{Am2}.

The paper is organized as follows: we introduce the basic terminology in Section \ref{sec:prel}. In Section \ref{sec:2}
we focus on the case that $\delta$
is a quasi-algebraic derivation with minimal polynomial $g$ and therefore $S_f$ an algebra of finite
dimension over $F={\rm Const}(\delta)$.
In particular, for $f(t)=g(t)-d\in D[t;\delta]$ where $g$ is the minimum polynomial of $\delta|_{C(D)}$, the set of all logarithmic derivatives
$\{\delta(c)/c\,|\, c\in C(D) \}$ turns out to be a subgroup of the automorphism group of $S_f$.
We follow up on this observation and  define
 nonassociative differential extensions of a field in Section \ref{sec:3}  and
nonassociative differential extensions of a central simple division algebra in Section \ref{sec:4},
generalizing classical constructions by Amitsur and Jacobson, by choosing $f(t)=g(t)-d\in D[t;\delta]$
to be a $p$-polynomial of a certain type.

In particular, when $K$ is a purely inseparable extension of $F$ of exponent one with derivation $\delta$ such that
$\delta$ has minimum polynomial $g(t)=t^p-t\in F[t]$ and
 $f(t)=t^p-t-d\in K[t;\delta]$,
 $S_f=(K,\delta,d)$ is a unital nonassociative division algebra over $F={\rm Const}(\delta)$ of dimension $p^{2}$ for
 all $d\in K\setminus F$.
 Its automorphism group contains a cyclic subgroup of order $p$
which leaves $K$ invariant (Theorem \ref{thm:strongest}). This canonically generalizes Amitsur's associative cyclic
extensions of degree $p$. Thus $(K(x),\delta,h(x))$ is a division algebra over $F(x)$
of dimension $p^{2}$  for all
$h(x)\in K(x)\setminus F(x)$, and so a nonassociative cyclic extension of $K(x)$ (Example \ref{ex:1}).
This generalizes \cite[Proposition 1.9.10]{J96}.
Analogously, Theorem \ref{thm:last} in Section \ref{sec:4} generalizes
the result on associative cyclic extensions of $D$, cf. \cite[Theorem 1.3.27]{J96}.

In Section \ref{sec:tensor} we construct tensor products of a central simple division algebra and a
nonassociative cyclic
extension, generalizing another classical result by Jacobson \cite[Theorem 1.9.13]{J96}
in Theorem \ref{thm:Jacobsontensor}. As an application, we show that
$(K(x),\delta,h(x))\otimes_{F(x)}D_{F(x)}$ with $h(x)\in K(x)\setminus F(x)$ is a division algebra if and only if
$h(x)\not=(t-z)^p-t^p-z$ for all $z\in D_{K(x)}$
in Example \ref{ex:last}, provided that $\delta$ has minimum polynomial $g(t)=t^p-t$ and that
$D\otimes_F K$ is a division algebra. This algebra is a nonassociative cyclic
extension of $D_{K(x)}$ if it is division.
This generalizes \cite[Theorem 1.9.11]{J96}, where $h(x)=x$ in which case the algebra is division.
\\\\
The theory presented in this paper can be extended to nonassociative cyclic extensions of degree any prime power if desired, along
the lines presented here. It complements the theory of nonassociative cyclic algebras $(K/F,\sigma,d)$ which are constructed
out of twisted polynomial rings $K[t;\sigma]$ and $f(t)=t^m-d\in K[t;\sigma]$, where $K/F$ is a cyclic Galois extension of degree $m$,
${\rm Gal}(K/F)=<\sigma>$ and $F$ has characteristic zero or $p$, but now with $p$ coprime to $m$, cf. \cite{S12}, and the
theory of nonassociative generalized cyclic algebras $(D,\sigma,d)$ which are constructed
out of twisted polynomial rings $D[t;\sigma]$ and $f(t)=t^m-d\in D[t;\sigma]$, where $D$ is a cyclic division algebra
 of degree $m$, $f(t)=t^m-d\in D[t;\sigma]$, and $\sigma$ chosen suitably, cf. \cite{P16.1}, \cite{PS15.4}.

%
%

\section{Preliminaries} \label{sec:prel}

\subsection{Nonassociative algebras} \label{subsec:nonassalgs}


Let $F$ be a field and let $A$ be an $F$-vector space. $A$ is an
\emph{algebra} over $F$ if there exists an $F$-bilinear map $A\times
A\to A$, $(x,y) \mapsto x \cdot y$, denoted simply by juxtaposition
$xy$, the  \emph{multiplication} of $A$. An algebra $A$ is called
\emph{unital} if there is an element in $A$, denoted by 1, such that
$1x=x1=x$ for all $x\in A$. We will only consider unital algebras
from now on without explicitly saying so.

An algebra $A\not=0$ is called a \emph{division algebra} if for any
$a\in A$, $a\not=0$, the left multiplication  with $a$, $L_a(x)=ax$,
and the right multiplication with $a$, $R_a(x)=xa$, are bijective.
If $A$ has finite dimension over $F$, $A$ is a division algebra if
and only if $A$ has no zero divisors \cite[pp. 15, 16]{Sch}.


Associativity in $A$ is measured by the {\it associator} $[x, y, z] =
(xy) z - x (yz)$. The {\it left nucleus} of $A$ is defined as ${\rm
Nuc}_l(A) = \{ x \in A \, \vert \, [x, A, A]  = 0 \}$, the {\it
middle nucleus} of $A$ is ${\rm Nuc}_m(A) = \{ x \in A \, \vert \,
[A, x, A]  = 0 \}$ and  the {\it right nucleus} of $A$ is
${\rm Nuc}_r(A) = \{ x \in A \, \vert \, [A,A, x]  = 0 \}$. ${\rm
Nuc}_l(A)$, ${\rm Nuc}_m(A)$, and ${\rm Nuc}_r(A)$ are associative
subalgebras of $A$. Their intersection
 ${\rm Nuc}(A) = \{ x \in A \, \vert \, [x, A, A] = [A, x, A] = [A,A, x] = 0 \}$ is the {\it nucleus} of $A$.
${\rm Nuc}(A)$ is an associative subalgebra of $A$ containing $F1$
and $x(yz) = (xy) z$ whenever one of the elements $x, y, z$ is in
${\rm Nuc}(A)$. The
 {\it center} of $A$ is ${\rm C}(A)=\{x\in \text{Nuc}(A)\,|\, xy=yx \text{ for all }y\in A\}$.


\subsection{Differential polynomial rings}

Let $D$ be an associative division ring and $\delta:K\rightarrow K$ a \emph{derivation}, i.e. an
additive map such that $$\delta(ab)=a\delta(b)+\delta(a)b$$ for all $a,b\in K$.
 The \emph{differential polynomial ring} $D[t;\delta]$
is the set of polynomials $$a_0+a_1t+\dots +a_nt^n$$ with $a_i\in D$,
where addition is defined term-wise and multiplication by
$$ta=at+\delta(a) \quad (a\in K).$$
For $f=a_0+a_1t+\dots +a_nt^n$ with $a_n\not=0$ define ${\rm
deg}(f)=n$ and ${\rm deg}(0)=-\infty$. Then ${\rm deg}(fg)={\rm deg}(f)+{\rm deg}(g).$
 An element $f\in R$ is \emph{irreducible} in $R$ if it is no unit and  it has no proper factors,
 i.e if there do not exist $g,h\in R$ with ${\rm deg}(g),{\rm deg} (h)<{\rm deg}(f)$ such
 that $f=gh$.

 $R=D[t;\delta]$ is a left and right principal ideal domain and there is a right division algorithm in $R$: for all
$g,f\in R$, $g\not=0$, there exist unique $r,q\in R$ with ${\rm
deg}(r)<{\rm deg}(f)$, such that $$g=qf+r.$$
There is also a left division algorithm in $R$ \cite[p.~3 and Prop. 1.1.14]{J96}. (Our
terminology is the one used by Petit \cite{P66}; it is opposite to Jacobson's.)

  We know that
$$G_{\tau,a}(\sum_{i=0}^na_it^i)=\sum_{i=0}^n\tau(a_i)(t+a)^i$$ is an automorphism of $R=D[t;\delta]$ if and only if
$\tau$ is an automorphism of $D$ and
$$\delta(\tau(z))-\tau(\delta(z))=a\tau(z)-\tau(z)a$$
 for all $z\in D$ \cite{K}.

\subsection{Nonassociative algebras obtained from differential polynomial rings} \label{subsec:structure}

Let $f\in R=D[t;\delta]$ of degree $m$. Let ${\rm mod}_r f$ denote the remainder of right division by $f$.
Define  $F={\rm Cent}(\delta)=\{a\in D\,|\, \delta(a)=0\}$.

 \begin{definition} (cf. \cite[(7)]{P66})
  The vector space
$$R_m=\{g\in D[t;\delta]\,|\, {\rm deg}(g)<m\}$$
 together with the multiplication
 $$g\circ h=gh \,\,{\rm mod}_r f $$
 is a unital nonassociative algebra $S_f=(R_m,\circ)$ over
 $$F_0=\{a\in D\,|\, ah=ha \text{ for all } h\in S_f\}.$$
\end{definition}

 $F_0$  is a commutative subring of $D$ \cite[(7)]{P66} and it is easy to check that $F_0={\rm Cent}(\delta)\cap C(D)$.
 The algebra $S_f$ is also denoted by $R/Rf$ \cite{P66, P68}
 if we want to make clear which ring $R$ is involved in the construction.
 In the following, we call the algebras $S_f$  \emph{Petit algebras} and denote their multiplication simply by juxtaposition.

  Using left division by $f$ and
  the remainder ${\rm mod}_l f$ of left division by $f$, we can define
  a second unital nonassociative algebra structure on $R_m$ over $F$, called $\,_fS$ or $R/fR$.

It suffices to consider the Petit algebras $S_f$, however, since every algebra
$\,_fS$ is the opposite algebra of some Petit algebra (cf. \cite[(1)]{P66}).

 We call $f\in R$ a \emph{(right) semi-invariant} polynomial if for every $a\in D$ there is $b\in D$ such that
 $f(t)a=bf(t)$. If also $f(t)t=(ct+d)f(t)$ for some $c,d\in D$ then $f$ is called \emph{(right) invariant}.
 The invariant polynomials are also called \emph{two-sided}, as the ideals they generate are left and right ideals.

\begin{theorem} (cf. \cite[(2), p.~13-03, (5), (6), (7), (9)]{P66})  \label{Properties of S_f petit}
Let $f(t) 
\in R = D[t;\delta]$.
\\
(i) If $S_f$ is not associative then ${\rm Nuc}_l(S_f)={\rm Nuc}_m(S_f)=D$ and
$${\rm Nuc}_r(S_f)=\{g\in R\,|\, fg\in Rf\}.$$
 (ii) The powers of $t$ are associative if and only if $t^mt=tt^m$
 if and only if $t\in {\rm Nuc}_r(S_f)$ if and only if $ft\in Rf.$ 
\\ (iii) If $f$ is irreducible then ${\rm Nuc}_r(S_f)$ is an associative division algebra.
\\ (iv) Let $f\in R$ be irreducible and $S_f$ a finite-dimensional $F$-vector space
or free of finite rank as a right ${\rm Nuc}_r(S_f)$-module. Then $S_f$
is a division algebra.
\\ Conversely, if $S_f$ is a division algebra then $f$ is irreducible.
\\
(v) $S_f$ is associative if and only if $f$ is a two-sided element.
In that case, $S_f$ is the usual quotient algebra  $D[t;\delta]/(f)$.
\end{theorem}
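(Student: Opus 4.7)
The plan is to exploit throughout that $R_m$ is a complete set of representatives for the left $R$-module $R/Rf$, and that under this identification the Petit product $g\circ h$ is the class of $gh$ modulo $Rf$. For part (i), the inclusions $D\subseteq{\rm Nuc}_l(S_f)\cap{\rm Nuc}_m(S_f)$ are automatic: left- or middle-multiplication in $R$ by an $a\in D$ preserves degrees, so no right reduction is triggered, and the identities $a(gh)=(ag)h$ and $(ga)h=g(ah)$ in $R$ survive passage to $S_f$. The formula ${\rm Nuc}_r(S_f)=\{g\in R:fg\in Rf\}$ comes from a direct computation: writing $gh=qf+(g\circ h)$ and expanding $(g\circ h)\circ k$ and $g\circ(h\circ k)$ modulo $Rf$, the associator equals $-qfk\,{\rm mod}_r\, f$, which vanishes for all $g,h\in R_m$ iff $fk\in Rf$. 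The reverse inclusions ${\rm Nuc}_l(S_f),{\rm Nuc}_m(S_f)\subseteq D$ under the non-associativity hypothesis follow from an analogous but more delicate calculation: an element of positive degree in either nucleus would, through the associativity equations, force $f$ to be a two-sided element of $R$ and hence $S_f$ to coincide with the associative quotient $R/Rf$, contradicting the assumption.

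Part (ii) is then a corollary: by the right nucleus formula just obtained, $t\in{\rm Nuc}_r(S_f)$ is equivalent to $ft\in Rf$. A short computation with the identity $t^{m+1}=ft+(t^m-f)t$ (when $f$ is monic, with an analogous formula in general) shows $t\circ t^m-t^m\circ t=ft\,{\rm mod}_r\, f$ in $S_f$, so $t^mt=tt^m$ is equivalent to $ft\in Rf$ as well. Once $t$ lies in the right nucleus, so does every power of $t$, since ${\rm Nuc}_r(S_f)$ is an associative subalgebra; every triple associator among powers of $t$ then vanishes, and conversely $t^m\circ t=t\circ t^m$ is one such associativity. For part (iii), irreducibility of $f$ makes $Rf$ a maximal left ideal of the left principal ideal domain $R=D[t;\delta]$, so $R/Rf$ is a simple left $R$-module. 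The action $\bar h\mapsto\overline{hg}$ on $R/Rf$ is well-defined exactly when $fg\in Rf$, and in that case is a left-$R$-module endomorphism; this identifies $\{g\in R:fg\in Rf\}/Rf$ with ${\rm End}_R(R/Rf)^{\mathrm{op}}$, which is a division ring by Schur's lemma.

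For the forward direction of (iv), suppose $g\circ h=0$ with $g,h\in R_m\setminus\{0\}$, i.e.\ $gh\in Rf$. Since $R_m\cap Rf=\{0\}$, the class $\bar h$ is a nonzero element of the simple module $R/Rf$, so $R\bar h=R/Rf$ and the annihilator of $\bar h$ is a maximal left ideal, necessarily principal, say $Rf'$, with $f'$ similar to $f$ and in particular of degree $m$. But $g\bar h=\overline{gh}=0$ forces $g\in Rf'$ and hence $\deg g\ge m$, contradicting $g\in R_m\setminus\{0\}$. Thus $L_g$ and $R_h$ are both injective on $S_f$, and either finiteness hypothesis promotes injectivity to bijectivity ($F$-linearity and rank-nullity in the first case, a standard argument over the division ring ${\rm Nuc}_r(S_f)$ in the second), so $S_f$ is division. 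The converse is immediate: any factorisation $f=f_1f_2$ with $0<\deg f_i<m$ produces nonzero zero-divisors $f_1\circ f_2=0$ in $S_f$. For (v), if $f$ is two-sided then $Rf=fR$ is a two-sided ideal of $R$, the quotient $R/Rf$ is an associative ring, and $S_f$ coincides with it; conversely, if $S_f$ is associative then $D\cup\{t\}\subseteq{\rm Nuc}_r(S_f)$, whence $fa\in Rf$ for all $a\in D$ and $ft\in Rf$, and because $\{r\in R:fr\in Rf\}$ is closed under products while $R$ is generated as a ring by $D$ and $t$, this set coincides with $R$, so $f$ is two-sided.

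The principal obstacle is the forward direction of (iv): deducing absence of zero-divisors from irreducibility of $f$. The cleanest route is via the simple-module description of $R/Rf$ together with the observation that in the noncommutative PID $R$ the annihilator of a generator of a simple cyclic module is a principal maximal left ideal whose generator has degree equal to $\deg f$, so no nonzero element of $R_m$ can lie in it.
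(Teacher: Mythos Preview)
The paper does not prove this theorem; it is stated with a citation to Petit \cite{P66} and used as a black box throughout. Your proposal is therefore not competing with anything in the paper itself, and it is essentially correct as a self-contained proof.

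Two points where you could be more explicit. First, the ``more delicate calculation'' you invoke for the reverse inclusions ${\rm Nuc}_l(S_f),{\rm Nuc}_m(S_f)\subseteq D$ is in fact governed by the \emph{same} associator identity you already derived, $[g,h,k]=-q\,fk\ {\rm mod}_r\,f$ with $gh=qf+(g\circ h)$. If $h\in R_m$ has degree $d\ge 1$ with leading coefficient $a\in D^\times$, then taking $g=t^{m-d}$ forces $q=a$, a unit; thus $h\in{\rm Nuc}_m(S_f)$ would require $fk\in Rf$ for every $k\in R_m$, and since $\{r\in R:fr\in Rf\}$ is a subring containing $D$ and $t$, this makes $f$ two-sided. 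The left-nucleus argument is symmetric with $h=t^{m-\deg g}$. So your sketch is right and no harder than the right-nucleus case.

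Second, in part (iv) the reason $\deg f'=m$ is that the isomorphism $R/Rf'\cong R/Rf$ of left $R$-modules restricts to an isomorphism of left $D$-modules, and $R/Rf$ is free of rank $\deg f$ over $D$. For the bijectivity step it is worth noting that $R_a$ is always left $D$-linear (because $D\subseteq{\rm Nuc}_l(S_f)$) and $S_f$ is free of rank $m$ over $D$, so $R_a$ is automatically bijective once injective; only $L_a$ genuinely requires one of the two finiteness hypotheses, via right ${\rm Nuc}_r(S_f)$-linearity.
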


\begin{proposition}\label{prop:subfield}
Let $R=D[t;\delta]$ and  $F_0=F\cap C(D)$. For all $f \in F_0[t;\delta]=F_0[t]$,
$$F_0[t]/(f)\cong F_0\oplus F_0t\oplus\dots\oplus F_0t^{m-1}$$ is a commutative subring of $S_f$ which is an
algebraic field extension of $F_0$ if $f(t) \in F_0[t]$ is irreducible, and
$$F_0[t]/(f)=F_0\oplus F_0t\oplus\dots\oplus F_0t^{m-1}\subset {\rm Nuc}_r(S_f).$$
\end{proposition}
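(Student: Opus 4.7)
The plan is to exploit the fact that elements of $F_0$ both commute with $t$ in $R$ (since $F_0\subseteq C(D)$) and are killed by $\delta$ (since $F_0\subseteq {\rm Cent}(\delta)$), so that $F_0[t;\delta]$ collapses to the ordinary commutative polynomial ring $F_0[t]$, sitting inside $R=D[t;\delta]$ as a commutative subring. From there, almost everything is forced by the uniqueness clause in the right division algorithm.

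First I would identify the $F_0$-subspace $V=F_0\oplus F_0t\oplus\cdots\oplus F_0t^{m-1}\subseteq R_m=S_f$. The elements $1,t,\dots,t^{m-1}$ are $D$-linearly independent in $R$, hence a fortiori $F_0$-linearly independent, so the direct sum decomposition is automatic. Next I would verify that $V$ is closed under the multiplication $\circ$ of $S_f$: given $g,h\in V$, the product $gh$ taken in $R$ lies in the commutative subring $F_0[t]$, and the right division $gh=qf+r$ with $\deg r<m$ may be carried out entirely inside $F_0[t]$, producing $q,r\in F_0[t]$. By the uniqueness of right division in $R$ (Petit / Jacobson), this $r$ coincides with $g\circ h=gh\,{\rm mod}_r f$. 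Hence $g\circ h\in V$ and the induced multiplication on $V$ agrees with the multiplication in the commutative quotient ring $F_0[t]/(f)$. This simultaneously gives the isomorphism $V\cong F_0[t]/(f)$ as commutative rings and shows that $V$ is a (commutative, associative) subring of $S_f$.

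The algebraic field extension assertion is then the classical fact that $F_0[t]/(f)$ is a field whenever $f$ is irreducible in the commutative polynomial ring $F_0[t]$ over the field $F_0$; no nonassociative input is needed here.

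Finally, for the inclusion $V\subseteq {\rm Nuc}_r(S_f)$ I would split on associativity. If $S_f$ is associative there is nothing to prove. Otherwise I apply Theorem \ref{Properties of S_f petit}(i): it suffices to show $fg\in Rf$ for every $g\in V$. But $f,g\in F_0[t]$ and $F_0[t]$ is commutative, so $fg=gf=g\cdot f\in Rf$, as required. The only point that needs a little care is the appeal to uniqueness of the right division remainder to pass from a computation inside the commutative subring $F_0[t]$ to the remainder computed inside the noncommutative ring $R$; once that observation is made, the rest of the proof is essentially formal.
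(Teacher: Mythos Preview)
Your argument is correct. The identification of $V=F_0\oplus F_0t\oplus\cdots\oplus F_0t^{m-1}$ with $F_0[t]/(f)$ via uniqueness of right-division remainders is exactly what underlies the paper's assertion that ``$S_f$ contains the commutative subring $F_0[t]/(f)$''; you have simply made this step explicit.

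The only genuine difference lies in how the inclusion $V\subseteq{\rm Nuc}_r(S_f)$ is established. The paper argues indirectly via Theorem~\ref{Properties of S_f petit}(ii): once one knows the powers of $t$ multiply associatively (which follows from the commutative subring identification), the equivalence in (ii) gives $t\in{\rm Nuc}_r(S_f)$; since ${\rm Nuc}_r(S_f)$ is an associative subalgebra containing $F_0$, all of $V$ then lies inside it. You instead appeal directly to the description ${\rm Nuc}_r(S_f)=\{g\,:\,fg\in Rf\}$ from part~(i) and check $fg=gf\in Rf$ using commutativity of $F_0[t]$. Your route is slightly more self-contained (it does not need the fact that ${\rm Nuc}_r(S_f)$ is closed under addition and multiplication), while the paper's route highlights the role of $t$ itself as the key generator lying in the right nucleus; both are short and equally valid.
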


\ignore{
By Theorem \ref{Properties of S_f petit}, the powers of $t$ are associative if and only if $t^mt=tt^m$
 if and only if $t\in {\rm Nuc}_r(S_f)$ if and only if $ft\in Rf.$
}

\begin{proof}
Since $f \in F_0[t;\delta]=F_0[t]$,
$S_f$ contains the commutative subring $F_0[t]/(f).$  This subring is isomorphic to the ring consisting of the elements $\sum_{i=0}^{m-1}a_it^i$
with $a_i\in F_0$. In particular, we know that the powers of $t$ are associative.
By Theorem \ref{Properties of S_f petit} (ii), this implies that $t\in {\rm Nuc}_r(S_f)$.
 Clearly $F_0\subset {\rm Nuc}_r(S_f)$, so if $t\in {\rm Nuc}_r(S_f)$  then
 $F_0\oplus F_0t\oplus\dots\oplus F_0t^{m-1}\subset  {\rm Nuc}_r(S_f)$, hence we obtain the assertion. If $f$ is irreducible in $F_0[t]$, this is an algebraic
field extension of $F_0$.
\end{proof}

\begin{proposition}  \label{prop:new(1)}
Let $f\in R$ be  of degree $m\geq2$.
Then $f$ is a semi-invariant polynomial if and only if
$$D\subset {\rm Nuc}_r(S_f).$$
\end{proposition}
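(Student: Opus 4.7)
The proof is essentially a direct application of Theorem \ref{Properties of S_f petit}, once one reads off the degree constraint in the characterization of the right nucleus.

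My plan is to translate membership of an element $a\in D$ in $\mathrm{Nuc}_r(S_f)$ into the equation $fa = bf$ with $b\in D$, and then observe that this is literally the definition of semi-invariance. The key observation is the following degree computation: if $a\in D$, then $\deg(fa)=\deg(f)=m$, so any $b\in R$ satisfying $fa=bf$ must have $\deg(b)=0$, i.e.\ $b\in D$. Hence $fa\in Rf$ is equivalent to the existence of $b\in D$ with $fa=bf$. This is the only real calculation, and it is immediate from $\deg(fg)=\deg(f)+\deg(g)$.

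With this in hand I would split according to whether $S_f$ is associative. First, suppose $S_f$ is not associative. Then Theorem \ref{Properties of S_f petit}(i) gives
\[
\mathrm{Nuc}_r(S_f) \;=\; \{\,g\in R_m \mid fg\in Rf\,\}.
\]
Restricting to constants $a\in D\subset R_m$ and using the degree observation above, we see that $a\in \mathrm{Nuc}_r(S_f)$ if and only if there exists $b\in D$ with $fa=bf$. Therefore $D\subset \mathrm{Nuc}_r(S_f)$ if and only if for every $a\in D$ such a $b\in D$ exists, which is precisely the semi-invariance of $f$.

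It remains to dispose of the associative case. If $S_f$ is associative, then by Theorem \ref{Properties of S_f petit}(v), $f$ is two-sided (invariant), and in particular semi-invariant; conversely, $\mathrm{Nuc}_r(S_f)=S_f\supset D$, so both conditions in the claimed equivalence hold trivially and there is nothing to prove. Since there is no genuine obstacle beyond being careful about this associative/non-associative dichotomy, the whole proof amounts to a few lines: quote Theorem \ref{Properties of S_f petit}(i) and (v), do the one-line degree count, and read off the equivalence from the definition of semi-invariance.
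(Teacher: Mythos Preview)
Your proof is correct and follows the same approach as the paper: use the description of the right nucleus from Theorem~\ref{Properties of S_f petit} and then compare degrees to see that $fa\in Rf$ forces the left factor to lie in $D$. The only minor difference is that you explicitly separate off the associative case, whereas the paper tacitly uses that the identity $\mathrm{Nuc}_r(S_f)=\{g\in R_m\mid fg\in Rf\}$ remains valid (indeed trivially so) when $S_f$ is associative.
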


\begin{proof}
 If $f\in R$ is a  semi-invariant polynomial then for every $a\in D$ there is $b\in D$ such that
 $f(t)a=bf(t)\in Rf$ and hence
 $D\subset \{g\in R_m\,|\, fg\in Rf\}= {\rm Nuc}_r(S_f)$.

 Conversely, if $D\subset {\rm Nuc}_r(S_f)$ then for all $a\in D$ there is $g(t)\in R$ such that
 $f(t)a=g(t)f(t)$.
 Comparing degrees, this means $g(t)\in D$, so that for all $a\in D$ there is $b\in D$ such that
 $f(t)a=bf(t)$.
\end{proof}

\begin{corollary} \label{cor:new(1)}
 Let $R=D[t;\delta]$. If $R$ is simple then there are no non-associative algebras $S_f$ such that
 $D\subset {\rm Nuc}_r(S_f).$
\end{corollary}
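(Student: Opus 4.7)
The plan is to proceed by contradiction, combining Proposition~\ref{prop:new(1)} with Theorem~\ref{Properties of S_f petit}(v). Suppose there exists $f \in R$ with $\deg f = m \geq 2$ such that $S_f$ is non-associative and $D \subset {\rm Nuc}_r(S_f)$. Proposition~\ref{prop:new(1)} translates the hypothesis $D \subset {\rm Nuc}_r(S_f)$ into the semi-invariance of $f$: there is a map $\tau : D \to D$ with $fa = \tau(a) f$ for every $a \in D$. On the other hand, Theorem~\ref{Properties of S_f petit}(v) combined with non-associativity of $S_f$ forces $f$ to be not two-sided, so the left ideal $Rf$ (which is already closed under right multiplication by $D$ by semi-invariance, since $RfD = R(fD) \subset R(Df) = Rf$) fails to be closed under right multiplication by $t$.

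To reach a contradiction with the simplicity of $R$, I would construct the two-sided ideal $I$ of $R$ generated by $f$ and show it is both nontrivial and proper. A direct calculation using that $\delta$ extends to $R$ as the inner derivation $\mathrm{ad}(t)$ (so $tg - gt = \delta(g)$ for $g \in R$) shows $I = \sum_{k \geq 0} R \delta^k(f)$: the inclusion $\sum_{k \geq 0} R \delta^k(f) \subset I$ follows inductively from $\delta(f) = tf - ft \in RfR$, while the reverse inclusion uses the commutation formula $ft^k = \sum_{j=0}^k (-1)^j \binom{k}{j} t^{k-j} \delta^j(f)$ together with the semi-invariance $fD \subset Df$. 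Since $R$ is simple and $0 \neq f \in I$, one gets $I = R$, yielding an identity $1 = \sum_k r_k \delta^k(f)$ in $R$.

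The main obstacle is extracting an explicit contradiction from this identity. Iterating semi-invariance via $\delta$ yields the relation $\delta(f)a = \tau(a)\delta(f) + (\delta\tau - \tau\delta)(a) f$ and, more generally, imposes tight constraints on how $D$ acts on each $\delta^k(f)$; combined with a leading-coefficient analysis of $f$ (which forces $\tau(a) = a_m a a_m^{-1}$, where $a_m$ is the leading coefficient of $f$), these constraints should prevent $1$ from lying in $\sum_k R\delta^k(f)$. A cleaner route that bypasses this explicit computation is to invoke the Ore-theoretic fact that every semi-invariant polynomial is \emph{bounded}, i.e., divides a nonzero two-sided polynomial of positive degree; since no such polynomial can exist in a simple $R$, the semi-invariance of $f$ is immediately incompatible with simplicity, and Proposition~\ref{prop:new(1)} then finishes the argument.
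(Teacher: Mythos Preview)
Your ``cleaner route'' at the end is precisely the paper's argument: the paper simply cites \cite{LLLM} for the fact that $R=D[t;\delta]$ is not simple if and only if there exists a non-constant semi-invariant polynomial, and then applies Proposition~\ref{prop:new(1)}. Your formulation via boundedness (semi-invariant $\Rightarrow$ bounded $\Rightarrow$ existence of a non-constant invariant $\Rightarrow$ $R$ not simple) is the same chain of ideas, just unpacked one step further; either way the substance is an appeal to the \cite{LLLM} structure theory, and the corollary follows in one line.

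The bulk of your proposal, however, is the first ``direct'' approach, and that part has a genuine gap which you yourself flag. Writing the two-sided ideal generated by $f$ as $I=\sum_{k\ge 0} R\,\delta^k(f)$ is fine, and simplicity gives $1\in I$; but you never show why this is impossible. The sketch ``tight constraints \dots\ should prevent $1$ from lying in $\sum_k R\,\delta^k(f)$'' is not an argument: the $\delta^k(f)$ all have the same degree $m$, so there is no obvious degree obstruction to a relation $1=\sum_k r_k\,\delta^k(f)$, and the leading-coefficient analysis you mention does not by itself rule it out. In effect, completing this line would amount to reproving the \cite{LLLM} result from scratch. Since the paper (and your own final paragraph) already invoke that result, the long detour through $I$ and the commutation formulas is unnecessary and should be dropped; keep only the one-line deduction from Proposition~\ref{prop:new(1)} and the cited characterization of simplicity.
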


\begin{proof}
 If $R=D[t;\delta]$, $R$ is not simple if and only if there is a non-constant
semi-invariant $f\in R$  \cite{LLLM}. The assertion now follows from Proposition \ref{prop:new(1)}.
\end{proof}

We will assume throughout the paper that $f(t)\in D[t;\delta]$ has $\text{deg}(f(t)) = m \geq
2$ (if $f$ has degree $m=1$ then $S_f\cong D$) and that
$\delta\not=0$. Without loss of generality, we will only only look at monic $f(t)$.

\subsection{The characteristic $p>0$ case} For a division ring $D$ of characteristic $p$ and $R=D[t;\delta]$,
$$(t-b)^p=t^p-V_p(b), \quad V_p(b)=b^p+\delta^{p-1}(b)+*$$
 for all $b\in D$,
with $*$ a sum of commutators of $b$, $\delta(b),\dots, \delta^{p-2}(b)$.
If $D$ is commutative, or if $b$ commutes with all its derivatives, then the sum $*$ is 0 and the formula simplifies to
$$V_p(b)=b^p+\delta^{p-1}(b)$$
 \cite[p.~17ff]{J96}. An iteration  yields
$$(t-b)^{p^e}=t^{p^e}-V_{p^e}(b)$$
for all $b\in D$ \cite[1.3.22]{J96} with $V_{p^e}(b)=V_p^e(b)=V_p(\dots (V_p(b))\dots )$.
For any $p$-polynomial
$$f(t)=a_0t^{p^e}+a_1t^{p^{e-1}}+\dots+a_et+d\in D[t;\delta]$$
we thus have
$$f(t)-f(t-b)=a_0V_{p^e}(b)+a_1V_{p^{e-1}}(b)+\dots+a_eb$$
for all $b\in D$ and define
$$V_f(b)=a_0V_{p^e}(b)+a_1V_{p^{e-1}}(b)+\dots+a_eb.$$


\begin{lemma} \label{le:Am}
(i) \cite[Lemmata 4]{Am2}
  $t^p-t-a\in D[t;\delta]$ is either irreducible or a product of commutative linear factors.
\\ (ii)  \cite[Lemmata 6]{Am2} $f(t)=t^p-t-d\in D[t;\delta]$ is irreducible if and only if $V_f(z)\not=0$
for all $z\in D$ which is equivalent to
$$V_p(z)-z\not=d$$
for all $z\in D$.
\\ (iii)  \cite{CB} In characteristic 3, $f(t)=t^3-ct-d\in D[t;\delta]$ is irreducible if and only if
$$V_3(z)-cz\not=d  \text{ and } V_3(z)-zc+\delta(c)\not=d$$
for all $z\in D$.
\end{lemma}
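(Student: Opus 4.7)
My plan for all three parts uses a common computational backbone: to decide whether $t-b$ is a linear factor of $f$, I would expand using the identity $(t-b)^{p^i}=t^{p^i}-V_{p^i}(b)$ from the preceding paragraph and read off the remainder of the appropriate one-sided division by $t-b$; this immediately yields the arithmetic criterion in each case. The genuinely hard step, occurring only in~(i), is arguing that reducibility of $t^p-t-a$ already forces the existence of a linear right factor in the first place.

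For~(i), I would first handle the easier direction: assuming $t-b$ is a right factor of $f(t)=t^p-t-a$, write $t=(t-b)+b$ and $t^p=(t-b)^p+V_p(b)$ to obtain
$$f(t)=(t-b)^p-(t-b)+\bigl(V_p(b)-b-a\bigr),$$
so the condition for $t-b$ to be a right factor reduces to $V_p(b)-b=a$. The key observation is that for each $i\in\mathbb{F}_p$, the element $i$ is central with vanishing derivatives, whence $V_p(b+i)=V_p(b)+V_p(i)=V_p(b)+i$ and hence $V_p(b+i)-(b+i)=a$; so $t-b-i$ is a right factor for every $i\in\mathbb{F}_p$. I would then check directly that these $p$ linear polynomials pairwise commute in $R$ (their commutators collapse to zero), so they generate a commutative subring in which the Artin--Schreier identity $\prod_{i=0}^{p-1}(X-i)=X^p-X$ applies with $X=t-b$, and combining this with the displayed equation gives $f(t)=\prod_{i=0}^{p-1}(t-b-i)$, the desired product of commuting linear factors. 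The remaining, and hardest, ingredient --- that reducibility actually supplies such a $b$ --- is the core of Amitsur's original proof: one extracts a nonzero zero divisor from $S_f=R/Rf$ and uses the $p$-polynomial structure of $f$ to descend it to a linear right factor.

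Part~(ii) is then immediate from~(i): $f(t)=t^p-t-d$ is reducible iff some $b\in D$ satisfies $V_p(b)-b=d$, so the irreducibility criterion is $V_p(z)-z\neq d$ for all $z\in D$, with the $V_f$ formulation following from the definition of $V_f$ applied to this $f$ (with $a_0=1$, $a_1=-1$). For~(iii), since $\deg f=3$, reducibility of $f(t)=t^3-ct-d$ is equivalent to the existence of a linear factor on one side, so I would check both. The right-factor expansion
$$f(t)=(t-b)^3-c(t-b)+\bigl(V_3(b)-cb-d\bigr)$$
immediately gives the criterion $V_3(b)-cb=d$. For the left factor, I would write $f=(t-b)q(t)+r$ and use $tc=ct+\delta(c)$ to push $(t-b)$ past $c$; tracking the noncommutative correction $\delta(c)$ through the remainder would produce the second criterion $V_3(b)-bc+\delta(c)=d$. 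This left-division bookkeeping is the main obstacle in~(iii), and is the only genuinely one-sided calculation in the three parts.
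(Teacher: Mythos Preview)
Your proposal is correct and follows essentially the same approach as the paper. The paper only gives a proof for part~(iii) (citing Amitsur for (i) and (ii)), and there it argues exactly as you do: since $\deg f=3$, irreducibility is equivalent to the absence of both a right and a left linear factor, the right-factor condition being read off from the $V_f$ formula and the left-factor condition obtained by a direct computation that picks up the $\delta(c)$ correction. Your sketch for (i) and (ii) actually supplies more detail than the paper, which simply defers to \cite{Am2}.
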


\begin{proof}
(iii) $f$ is irreducible if and only if it neither right nor left divisible by some linear factor
$t-z$, $z\in D$. Now $f(t)\not=g(t)(t-z)$ for all $z\in D$ is equivalent to $V_f(z)\not=0$ for all $z\in D$ \cite{J96}
(i.e. to $V_3(z)-cz\not=d$),
and $f(t)\not=(t-z)h(t)$ for all $z\in D$ is equivalent to $V_3(z)-zc-d+\delta(c)\not=0$
for all $z\in D$ by a straightforward calculation.
\ignore{
(ii)  $t^p-t-a\in D[t;\delta]$ is reducible if and only if, by (i), it is right divisible by a linear factor
$t-z$, $z\in D$. This is equivalent to $V_p(z)-z\not=a$ \cite[1.3.26]{J96}.

}
\end{proof}

%
%

\section{Petit's algebras from algebraic derivations}\label{sec:2}

Let $C$ be a field  of characteristic $p$ and $D$  a central division algebra over $C$
 of degree $n$ (we allow $D=C$ here).
Let  $\delta$ be a derivation of $D$, such that $\delta|_C$ is algebraic with minimum polynomial
$$g(t)=t^{p^e}+a_1t^{p^{e-1}}+\dots+ a_et\in F[t]$$
 of degree $p^e$, where $F={\rm Const}(\delta)$. 
 Then $g(\delta)=id_{d_0}$ is an inner
derivation of $D$ and we choose $d_0\in F$  so that $\delta(d_0)=0$ \cite[Lemma 1.5.3]{J96}. The center of $R=D[t;\delta]$ is $F[z]$ with
$z=g(t)-d_0$ and the two-sided $f\in D[t;\delta]$ are of the form $f(t)=uh(t)$ with $u\in D$
and $h(t)\in C(R)$ \cite[Theorem 1.1.32]{J96}. For all $a\in C$, define
$$V(a)=V_g(a)=V_{p^e}(a)+a_1V_{p^{e-1}}(a)+\dots+a_ea.$$
Then $V(a)\in F$ \cite{J37} and $V:C\longrightarrow F$ is a homomorphism of the additive groups
$C$ and $F$.
Moreover,
$$V(a)=0 \text{ if and only if }a=\delta(c)/c$$
 for some $c\in C$ (\cite{J37}, cf. also \cite[p.~2]{Hoe}.
$V$ can be seen as an additive analogue to the norm of a cyclic separable field extension.

 In particular, $\delta$ is a quasi-algebraic derivation on $D$
 in the sense of \cite{LLLM} and so $R=D[t;\delta]$ is not simple.
Theorem \ref{Properties of S_f petit} together with Proposition \ref{prop:subfield} and  Corollary \ref{cor:new(1)}  yields:

\begin{theorem}\label{thm:mainI}
Let $f\in D[t;\delta]$ have degree $m$.
\\ (i) $S_f$ is a unital algebra over $F$ of
dimension $m n^2 p^e$ and if $f$ is irreducible then $S_f$ is a division algebra over $F$. If $f$ is not two-sided then
its left and middle nucleus are $D$. $D$ is not contained in the right nucleus.
\\ (ii) If $f\in F[t]$ then ${\rm Nuc}_r(S_f)$ contains the subring
$$F[t]/(f) \cong F\oplus F t\oplus\dots\oplus Ft^{m-1}$$
which is a subfield of degree $mp^e$ over $F$ whenever $f(t)\in F[t]$ is irreducible.
\\ (iii) If $f(t)\in C[t;\delta]$, then $S_f$ contains
$C[t;\delta]/C[t;\delta]f$  as a subalgebra of dimension $mp^{e}$ over $F$.
\end{theorem}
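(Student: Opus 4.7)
The plan is to deduce each of (i), (ii), (iii) by combining the preliminary results of Section \ref{sec:prel} with the structural facts about quasi-algebraic derivations recalled at the start of Section \ref{sec:2}.

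For the dimension assertion in (i), I compute $\dim_F S_f$ in stages: $\dim_D S_f=m$ by construction of $S_f$ as polynomials of $t$-degree less than $m$ with $D$-coefficients; $[D:C]=n^2$ since $D$ is central simple of degree $n$ over $C$; and $[C:F]=p^e$ by the classical Jacobson theorem that for a quasi-algebraic derivation on a field the degree of the minimal polynomial equals the degree of the field extension over its constants. Multiplying these yields $\dim_F S_f = m n^2 p^e$. Finite dimensionality together with irreducibility of $f$ then gives the division algebra conclusion via Theorem \ref{Properties of S_f petit}(iv). For the nuclei, Theorem \ref{Properties of S_f petit}(v) says $f$ not two-sided makes $S_f$ nonassociative, so Theorem \ref{Properties of S_f petit}(i) reads off ${\rm Nuc}_l(S_f)={\rm Nuc}_m(S_f)=D$. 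The assertion $D\not\subset{\rm Nuc}_r(S_f)$ I would obtain as the contrapositive of Proposition \ref{prop:new(1)}: if $D\subset{\rm Nuc}_r(S_f)$ then $f$ is semi-invariant, and combining this with the description of the center $C(R)=F[z]$ (where $z=g(t)-d_0$) recalled at the start of Section \ref{sec:2}, one is forced into $f$ having the form $uh(t)$ with $u\in D$ and $h(t)\in F[z]$, hence $f$ is two-sided---contradicting the hypothesis.

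For (ii), I would apply Proposition \ref{prop:subfield} with $F_0=F$; this is legitimate because $F={\rm Const}(\delta)\subset C=C(D)$ in our setup, so $F\cap C(D)=F$. The proposition then yields $F[t]/(f)\cong F\oplus Ft\oplus\cdots\oplus Ft^{m-1}\subset{\rm Nuc}_r(S_f)$, a field extension of $F$ of the asserted degree whenever $f\in F[t]$ is irreducible. For (iii), the key observation is that $C[t;\delta]$ is itself a differential polynomial ring sitting as a subring of $D[t;\delta]$, and right division by $f\in C[t;\delta]$ is compatible with this inclusion. Hence $C[t;\delta]/C[t;\delta]f$ embeds as a subalgebra of $S_f$, with $F$-dimension $[C:F]\cdot m=mp^e$.

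The main obstacle---the only step that does not reduce to a direct citation---is the argument that in the quasi-algebraic setting at hand, non-two-sidedness forces non-semi-invariance of $f$. This requires pinning down the precise form of semi-invariant elements of $R=D[t;\delta]$ when $\delta|_C$ is quasi-algebraic, going slightly beyond what the stated preliminaries provide, which only describe the two-sided elements directly. One would either invoke the known classification of semi-invariants in this context or else weaken the statement to ``if $f$ is not semi-invariant, then $D\not\subset{\rm Nuc}_r(S_f)$''.
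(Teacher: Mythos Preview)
Your approach is exactly the paper's: the entire proof there is the single sentence preceding the theorem, which records that $R$ is not simple and then asserts that Theorem~\ref{Properties of S_f petit}, Proposition~\ref{prop:subfield}, and Corollary~\ref{cor:new(1)} yield the result. Your unpacking of this into the dimension count, the division-algebra statement via Theorem~\ref{Properties of S_f petit}(iv), the left/middle nucleus via (i) and (v), the subring in (ii) via Proposition~\ref{prop:subfield} with $F_0=F$, and the subalgebra in (iii) via compatibility of right division in $C[t;\delta]\subset D[t;\delta]$, is all correct and matches the intended argument.

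The gap you isolate in the clause $D\not\subset{\rm Nuc}_r(S_f)$ is real, and the paper does not close it either. The paper invokes Corollary~\ref{cor:new(1)}, but that corollary has the hypothesis that $R$ is simple, whereas the sentence immediately before the theorem explicitly notes that $R$ is \emph{not} simple in this quasi-algebraic setting. Your alternative route via Proposition~\ref{prop:new(1)} would require that ``not two-sided'' implies ``not semi-invariant'', and your suspicion that this fails is correct: take $f(t)=g(t)-d$ with $d\in C\setminus F$. Since $g(t)a=ag(t)$ for all $a\in D$ and $d$ is central in $D$, one has $f(t)a=af(t)$, so $f$ is semi-invariant and hence $D\subset{\rm Nuc}_r(S_f)$ by Proposition~\ref{prop:new(1)}; yet $f$ is not two-sided because $d\notin F$. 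The paper itself later records instances of this in Proposition~\ref{le:1} (the case $D=K$) and the second half of Lemma~\ref{le:2}. So the final clause of (i) is too strong as stated; your proposed weakening to the hypothesis ``$f$ not semi-invariant'' is precisely what the available tools support.
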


When $S_f$ is not associative, any automorphism of $S_f$ extends an automorphism of $D$ since the left nucleus of an algebra
 is left invariant under automorphisms.

 Let $H:D[t;\delta]\longrightarrow D[t;\delta]$
be any $F$-automorphism of $R=D[t;\delta]$. Then $H$ canonically induces an isomorphism of $F$-algebras
$$S_f\cong S_{H(f)}.$$
This leads us to:

 \begin{proposition}
Let $f(t)=a_0t^{p^e}+a_1t^{p^{e-1}}+\dots+a_et+d\in D[t;\delta]$ be a $p$-polynomial of degree $p^e$.
\\ (i) $S_f\cong S_h$ for all $h(t)=f(t)-V_f(a)$, $a\in C$.
\\ (ii) The map $G_{id,-a}$ defined via $G|_D=id_D$ and $G(t)=t-a$ is an automorphism of $S_f$ for all
$a\in C$ such that $V_{f}(a)=0.$
 \end{proposition}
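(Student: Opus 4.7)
The plan is to deduce both parts from the general observation recorded just before the proposition: any $F$-automorphism $H$ of $R = D[t;\delta]$ induces an $F$-algebra isomorphism $S_f \cong S_{H(f)}$. Thus the task reduces to exhibiting an explicit automorphism of $R$ and reading off how it acts on $f$.

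For part (i), I would take $H = G_{id_D, -a}$. The first step is to check that this is an $F$-automorphism of $R$ via Kishimoto's criterion recalled above: with $\tau = id_D$ and shift $-a$, the criterion reduces to $\delta(z) - \delta(z) = -az + za$ for every $z \in D$, i.e.\ $[z,a] = 0$ for all $z \in D$. This is precisely the assumption $a \in C$. The second step is to compute $G_{id_D,-a}(f)$. Because $G_{id_D,-a}$ fixes the coefficients $a_0, \dots, a_e, d$ and sends $t^{p^i}$ to $(t-a)^{p^i}$, and because $a$ is central (so $a$ commutes with each of its $\delta$-iterates), the commutator correction $*$ in $(t-b)^p = t^p - V_p(b)$ vanishes when $b = a$. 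Iterating gives $(t-a)^{p^i} = t^{p^i} - V_{p^i}(a)$, and linearity then yields
$$G_{id_D,-a}(f(t)) = a_0 t^{p^e} + \dots + a_e t + d - V_f(a) = f(t) - V_f(a) = h(t).$$
The general isomorphism principle then delivers $S_f \cong S_h$.

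Part (ii) is then immediate: if $V_f(a) = 0$ then $h = f$, so the isomorphism produced in (i) is an $F$-algebra automorphism of $S_f$ that fixes $D$ pointwise and sends (the class of) $t$ to $t - a$. The only non-bookkeeping step in the whole argument is the verification of the commutator condition $[z,a] = 0$, and this is exactly why the centrality hypothesis $a \in C$ is imposed; without it, $G_{id_D,-a}$ would already fail to be a ring automorphism of $R$, so the Petit-algebra isomorphism principle could not be applied. I do not anticipate any real obstacle beyond this.
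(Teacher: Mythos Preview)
Your proposal is correct and follows essentially the same route as the paper: apply $G_{id,-a}$ (an automorphism of $R$ precisely when $a\in C$), compute $G_{id,-a}(f)=f(t-a)=f(t)-V_f(a)$ via the $p$-polynomial identity, and invoke the $S_f\cong S_{H(f)}$ principle. The paper's proof is terser but identical in structure; your remark about the commutator term $*$ vanishing is an extra detail not strictly needed (the identity $f(t)-f(t-a)=V_f(a)$ holds by definition of $V_f$ for any $a\in D$), but it does no harm.
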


 \begin{proof}
 We know that $G=G_{id,-a}$ is an automorphism of $R$ if and only if $a\in C(D)=C$.
 $G$ is $F$-linear. Since $f(t)-f(t-a)=V_f(a)$ for all $a\in C$,
$G(f(t))=f(t-a)=f(t)-V_f(a)$,
so that $G(f(t))=f(t)$ for all $a\in C$  with $V_{f}(a)=0$ implying (ii).
\end{proof}

We conclude:

\begin{proposition}\label{prop:1}
For $f(t)=g(t)-d\in D[t;\delta]$,
$${\rm ker}(V)=\{a\in C\,|\, V(a)=0 \}=\{\delta(c)/c\,|\, c\in C \}$$
 is isomorphic to the subgroup $\{G_{id,-a}\,|\, a\in C \text{ with }
V(a)=0\}$ of ${\rm Aut}_F( S_f)$.
\end{proposition}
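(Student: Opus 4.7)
The plan is to break the statement into two independent assertions. First, the set-theoretic equality $\{a\in C\mid V(a)=0\}=\{\delta(c)/c\mid c\in C\}$ is not new: it is the classical Jacobson fact recalled (and cited to \cite{J37}) just before Theorem \ref{thm:mainI}, so I would simply invoke it. The work that remains is to build a group isomorphism between $(\ker V,+)$ and the subgroup $\{G_{{\rm id},-a}\mid a\in\ker V\}$ of ${\rm Aut}_F(S_f)$.

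For this I would define the map
\[
\Phi:(\ker V,+)\longrightarrow {\rm Aut}_F(S_f),\qquad a\mapsto G_{{\rm id},-a},
\]
and verify that it is well-defined, a group homomorphism, and injective, with the prescribed image. Well-definedness is immediate from the preceding proposition once one notices that for $f(t)=g(t)-d$ and $a\in C$ the constant term $-d$ makes no contribution to the additive sum defining $V_f$, so $V_f(a)=V_g(a)=V(a)$; hence every $a\in\ker V$ really does yield an $F$-automorphism $G_{{\rm id},-a}$ of $S_f$. The image of $\Phi$ is then by construction the subgroup displayed in the statement.

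For the homomorphism property I would compute $G_{{\rm id},-a}\circ G_{{\rm id},-b}$ directly: both factors restrict to the identity on $D$, and on $t$ one gets $t\mapsto t-b\mapsto (t-b)-a=t-(a+b)$, using that $G_{{\rm id},-a}$ is the identity on $D$ and $b\in C\subseteq D$. Since any such automorphism is determined by its values on $D$ and on $t$, this yields $G_{{\rm id},-a}\circ G_{{\rm id},-b}=G_{{\rm id},-(a+b)}$, exactly matching addition in $C$. Injectivity is then trivial: $G_{{\rm id},-a}={\rm id}_{S_f}$ forces $t-a=t$ and so $a=0$. Combined with the Jacobson identification of $\ker V$ with the set of logarithmic derivatives, this gives the isomorphism claimed in the proposition.

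I do not see a substantive obstacle. The only point requiring a moment's care is the observation that on $C$ the additive map $V_f$ coincides with $V$, because the constant term $-d$ in $f(t)=g(t)-d$ plays no role in the formula $V_f(b)=V_{p^e}(b)+a_1V_{p^{e-1}}(b)+\cdots+a_e b$; once this is noted, everything reduces to the formal calculation of composing two translations of the indeterminate.
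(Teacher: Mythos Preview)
Your proposal is correct and follows exactly the same route as the paper: both define the map $a\mapsto G_{{\rm id},-a}$ from $\ker V$ to ${\rm Aut}_F(S_f)$ and argue it is a bijection onto the displayed set. You supply more detail than the paper (the verification $V_f=V_g=V$, the explicit check $G_{{\rm id},-a}\circ G_{{\rm id},-b}=G_{{\rm id},-(a+b)}$, and injectivity), whereas the paper's proof merely asserts the one-one correspondence without spelling out the group-homomorphism property.
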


\begin{proof}
There is a one-one correspondence between the sets ${\rm ker}(V)$ and $\{G_{id,-a}\,|\, a\in C,
V(a)=0\}$ of ${\rm Aut}_F( S_f)$ given by $a\mapsto G_{id,-a}$ which yields the assertion.
\end{proof}

For $f(t)=t^p-t-d$ we have in particular
$G(f(t))=f(t)$ for all $a\in C(D)$ with $\delta^{p-1}(a)+a^p-a=0$.

\begin{lemma}\label{le:iso_order_p}
For $f(t)=t^p-t-d\in D[t;\delta]$, $G_{id,-1}\in {\rm Aut}(S_f)$ has order $p$.
\end{lemma}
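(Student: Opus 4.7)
The plan is to compute the iterates of $G = G_{id,-1}$ directly, using that $G$ acts as the identity on $D$ and sends $t \mapsto t-1$.

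First I would verify that $G$ is indeed an automorphism of $S_f$ by applying the previous proposition: we need $1 \in C = C(D)$, which is trivial, and $V_f(1) = 0$. Since $f(t) = t^p - t - d$, we compute $V_f(1) = V_p(1) - 1 = 1^p + \delta^{p-1}(1) - 1 = 0$, using that $\delta(1) = 0$ and hence $\delta^{p-1}(1) = 0$. So $G$ is a well-defined $F$-automorphism of $S_f$.

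Next I would compute the powers of $G$. Since $G|_D = id_D$, every iterate $G^k$ also restricts to the identity on $D$, so $G^k$ is determined by its value on $t$. By induction on $k$, $G^k(t) = t - k$ (viewing $k$ as the image of the integer $k$ in $F \subset D$). In particular $G^p(t) = t - p = t$ because $D$ has characteristic $p$, so $G^p = id_{S_f}$.

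Finally I would show the order is exactly $p$. If $G^k = id$ for some $1 \le k < p$, then $G^k(t) = t$ forces $k = 0$ in $D$; but $k$ is a nonzero integer with $0 < k < p$, and the prime field $\mathbb{F}_p$ embeds in $D$, so $k \ne 0$ in $D$, a contradiction. Hence $G$ has order exactly $p$.

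There is no real obstacle here; the only subtlety is justifying the formula $G^k(t) = t - k$, which follows immediately from the fact that $G$ fixes $D$ pointwise (so in particular fixes every integer), combined with $G(t) = t - 1$ and an induction using that $G$ is a ring homomorphism.
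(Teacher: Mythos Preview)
Your proof is correct and follows essentially the same route as the paper: both verify that $G_{id,-1}$ fixes $f$ via the computation $V_f(1)=1^p+\delta^{p-1}(1)-1=0$, so that $G$ descends to an automorphism of $S_f$, and then note that the iterates $G^k(t)=t-k$ give order exactly $p$. The paper simply cites Amitsur for the order-$p$ claim on $R=D[t;\delta]$ and says the rest ``is easy to see'', whereas you spell out the induction, but there is no substantive difference.
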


\begin{proof}
$G=G_{id,-1}$ is an automorphism of $R$ of order $p$ \cite{Am2}. For $f(t)=t^p-t-d$ we have
$G(f(t))=f(t)$ since $\delta^{p-1}(1)+1^p-1=0$.
Thus $G_{id,-1}$ induces an automorphism of $S_f$, it is easy to see it has order $p$.
\end{proof}

\ignore{
If $K$ is a field of characteristic $p$ then it has the subfield $\mathbb{F}_p$
and $x^p-x=0$ for all $x\in \mathbb{F}_p$.

\cite[Lemma 6]{Am2}  $t^p-t-a\in D[t;\delta]$ is irreducible if and only if for all $z\in D$, $$V_p(z)\not=a.$$

Problem with inconsistency between \cite{J96} and \cite{Am2} here in (ii):

 $d-cz\not=V_p(z)$  for $f(t)=t^p-ct-d$ in Jacobson \cite{J96},

$d\not=V_p(z)$ in \cite{Am2} for $f(t)=t^p-t-d$, instead of $d-z\not=V_p(z)$
}

\ignore{
Amitsur,
Non-commutative cyclic fields.
Duke Math. J. 21, (1954). 87–105.

A division ring Z is called a cyclic extension of a division ring F, if Z is a right F-module of dimension n, and Z
possesses a cyclic group G of n automorphisms with F as its fixed subring. As in the commutative case the author divides
the study into the case where n=pe, for p the characteristic of F, and the case where n is prime to the characteristic of F.
 The commutative extensions of degree and characteristic p are known to be obtained by the adjunction of a root of an
 irreducible polynomial xp-x-a. The author shows every cyclic extension of degree p of F is obtained as the difference ring
  F[t]-A where F[t]=F[t,D] is the ring of all differential polynomials relative to a derivation D of F, and A is the
  two-sided ideal (tp-t-a)F[t]. This construction parallels and generalizes the commutative case, and the extension theory
   for extensions of degree n=pe and characteristic p, also parallels the commutative case. However, while every cyclic
   commutative field of degree p over F can be imbedded in a cyclic commutative field of degree pe over F for e arbitrary,
    the conditions for existence of extensions in the noncommutative case are fairly complicated and appear to be necessary.
     The results for the case where n is prime to p also parallel the commutative case. The author also obtains some
     auxiliary
results on extensions which are essentially cyclic extensions of the center of F, and on inner automorphisms of Z over F.
}

%
%
%

\section{Nonassociative differential extensions of a field}\label{sec:3}

 Let $K$ be a field of characteristic $p$ together with a derivation $\delta:K\to K$
and $F={\rm Const}(\delta)$. Put $R=K[t;\delta]$.  We assume that $\delta$ is an algebraic derivation
of $K$ of degree $p^e$ with minimum polynomial
$$g(t)=t^{p^e}+c_1t^{p^{e-1}}+\dots+ c_et\in F[t]$$
 of degree $p^e$. Then $K$ is a purely inseparable extension of $F$ of exponent one, and
$K^p\subset F\subset K$.
 More precisely, $K=F(u_1, \dots,u_e)=F(u_1)\otimes_F\dots \otimes_F F(u_e)$, $u_i^p=a_i\in F$, and $[K:F]=p^e$.
 The center of $R$ is $F[z]$ with $z=g(t)-d_0$, $d_0\in F$.

Theorem \ref{thm:mainI} becomes:

\begin{theorem} 
\label{thm:9}
Let $f\in K[t;\delta]$ have degree $m$. Then $S_f$ is an algebra over $F$ of
dimension $mp^e$ and if $f(t)$ is irreducible then $S_f$ is a division algebra.
If $f$ is not two-sided then $S_f$ has left and middle nucleus $K$ and $K$ is not contained in the right nucleus.
\\ In particular, if $f(t)\in F[t]$ then ${\rm Nuc}_r(S_f)$ contains the subring
$$F[t]/(f) \cong F\oplus Ft\oplus\dots\oplus Ft^{m-1}$$
which is a subfield of degree $m$ over $F$ whenever $f(t)$ is irreducible in $F[t]$.
\end{theorem}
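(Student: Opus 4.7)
The plan is to obtain this as a direct specialization of Theorem \ref{thm:mainI} together with Proposition \ref{prop:subfield}, exploiting the parenthetical remark in Section \ref{sec:2} that the case $D=C$ is allowed. I would view $K$ itself as a central division algebra over its center, so in the notation of Section \ref{sec:2} I set $D=C=K$ and $n=1$. The hypotheses of Theorem \ref{thm:mainI} are then exactly the standing hypotheses of Section \ref{sec:3}: $\delta$ is a derivation of $D=K$ and $\delta|_C=\delta$ is algebraic of degree $p^e$ with minimum polynomial $g(t)\in F[t]$.

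With these substitutions, part (i) of Theorem \ref{thm:mainI} yields $\dim_F S_f = m n^2 p^e = m p^e$; irreducibility of $f$ forces $S_f$ to be a division algebra; and whenever $f$ is not two-sided the left and middle nuclei coincide with $D=K$, while $D=K$ fails to lie in the right nucleus. No extra argument is needed to transfer these properties, because the proof of Theorem \ref{thm:mainI} makes no use of $n>1$.

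For the ``in particular'' clause I would apply Proposition \ref{prop:subfield} with $F_0 = F \cap C(D) = F\cap K = F$, where the last equality uses $F\subset K$. Since $\delta$ vanishes on $F$, one has $F[t;\delta]=F[t]$, so for $f(t)\in F[t]$ the proposition gives
\[
F[t]/(f) \;\cong\; F\oplus Ft\oplus\dots\oplus Ft^{m-1} \;\subset\; {\rm Nuc}_r(S_f).
\]
If in addition $f$ is irreducible in $F[t]$, then $F[t]/(f)$ is a field extension of $F$ of degree $m$, which is the final assertion.

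There is essentially no obstacle: the only point that deserves verification is that $F_0=F$ when $K$ replaces $D$, and that the ``$D=C$'' specialization of Theorem \ref{thm:mainI} really is covered by its hypotheses — both of which are immediate from the setup of Section \ref{sec:3}. The theorem is therefore a reformulation of Theorem \ref{thm:mainI} adapted to the purely inseparable situation, and its proof is purely a matter of unpacking notation.
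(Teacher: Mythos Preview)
Your proposal is correct and matches the paper's approach exactly: the paper introduces Theorem~\ref{thm:9} with the phrase ``Theorem~\ref{thm:mainI} becomes:'' and gives no separate proof, treating it as the specialization $D=C=K$, $n=1$ of Theorem~\ref{thm:mainI} (itself obtained from Theorem~\ref{Properties of S_f petit}, Proposition~\ref{prop:subfield}, and Corollary~\ref{cor:new(1)}). Your explicit check that $F_0=F\cap C(D)=F\cap K=F$ and your direct appeal to Proposition~\ref{prop:subfield} for the ``in particular'' clause are precisely the substitutions needed.
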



We will investigate the following special case:

\begin{definition} Let $f(t)=g(t)-d\in K[t;\delta]$.
Then the  $F$-algebra
$$(K,\delta,d)=S_f=K[t;\delta]/K[t;\delta] f(t)$$
is called a \emph{(nonassociative) differential extension} of $K$.
\end{definition}

$(K,\delta,d)$ has dimension $p^{2e}$, is free of dimension $p$ as a  $K$-vector space, and contains $K$
as a subfield.
$(K,\delta,d)$ is associative if and only if $d\in F$ and a division algebra if and only if $f(t)$ is irreducible.
For $d\in K\setminus F$ it has left and middle nucleus $K$.

\begin{proposition} \label{le:1}
(i) For $d\in K\setminus F$, the right nucleus of $(K,\delta,d)$ contains $K$, thus
$${\rm Nuc}((K,\delta,d))=K.$$
The powers of $t$ are not associative in $(K,\delta,d)$.
\\ (ii) For all $a,d\in K$, $(K,\delta,d)\cong (K,\delta,d-V(a)).$
\end{proposition}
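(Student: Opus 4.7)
First, for part (i), the plan is to establish $K \subset {\rm Nuc}_r(S_f)$ via Proposition \ref{prop:new(1)}, which reduces the task to showing that $f(t) = g(t) - d$ is semi-invariant. Since $z = g(t) - d_0 \in C(R)$, one has $g(t)a = a g(t)$ for every $a \in K$ (using that $d_0 \in F \subset K$ commutes with $a$); combining this with commutativity of $K$ gives $f(t)a = ag(t) - da = ag(t) - ad = af(t)$, so $f$ in fact commutes elementwise with $K$ and is in particular semi-invariant. The assumption $d \in K \setminus F$ means --- by the associativity criterion stated just before the proposition combined with Theorem \ref{Properties of S_f petit}(v) --- that $f$ is not two-sided, so Theorem \ref{thm:9} supplies ${\rm Nuc}_l(S_f) = {\rm Nuc}_m(S_f) = K$; intersecting with ${\rm Nuc}_r(S_f) \supseteq K$ yields ${\rm Nuc}((K,\delta,d)) = K$.

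For the non-associativity of the powers of $t$, I would use Theorem \ref{Properties of S_f petit}(ii) to rephrase the claim as $t \notin {\rm Nuc}_r(S_f)$. Suppose for contradiction that $t \in {\rm Nuc}_r(S_f)$. Then the right nucleus, being an associative subalgebra of $S_f$ containing both $K$ and $t$, must contain every element $a t^k$ for $a \in K$ and $0 \leq k \leq p^e - 1$ (such products are already reduced modulo $f$), hence every $K$-linear combination of them --- that is, all of $S_f$. This would force $S_f$ to be associative, contradicting the hypothesis $d \notin F$.

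Finally, for part (ii), I would invoke statement (i) of the unlabeled Proposition preceding Proposition \ref{prop:1}, which asserts $S_f \cong S_h$ for every $h(t) = f(t) - V_f(a)$, $a \in K = C(K)$. Since $f = g - d$ is a $p$-polynomial with $V_f = V_g = V$, this gives $(K,\delta,d) \cong (K,\delta, d + V(a))$; substituting $-a$ for $a$ and using additivity of $V$ produces the stated isomorphism $(K,\delta,d) \cong (K,\delta, d - V(a))$. The whole argument is short and essentially a bookkeeping exercise; the only subtlety is tracking the sign of $V(a)$ across the substitution, which does not present a genuine obstacle.
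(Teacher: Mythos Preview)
Your proof is correct and tracks the paper's argument closely for the containment $K\subset{\rm Nuc}_r(S_f)$ (both show $f$ is semi-invariant via $g(t)a=ag(t)$, then invoke Proposition~\ref{prop:new(1)}) and for part (ii) (both use $G_{id,-a}$ and $G(f)=f-V(a)$). The one substantive difference is in showing the powers of $t$ are not associative: the paper appeals to the characterization from \cite{LLLM} that a semi-invariant $f$ is two-sided if and only if $ft\in Rf$, so non-two-sidedness forces $ft\notin Rf$ and hence (by Theorem~\ref{Properties of S_f petit}(ii)) $t\notin{\rm Nuc}_r(S_f)$; you instead argue directly that $t\in{\rm Nuc}_r(S_f)$ together with $K\subset{\rm Nuc}_r(S_f)$ would make ${\rm Nuc}_r(S_f)$ all of $S_f$, contradicting non-associativity. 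Your route is slightly more self-contained (it avoids the external citation), while the paper's route makes the equivalence $ft\in Rf \Leftrightarrow t\in{\rm Nuc}_r(S_f)$ do the work; both are entirely valid and of comparable length.
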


\begin{proof}
(i) Since $g$ is semi-invariant and monic of minimal degree, we have $g(t)a=ag(t)$ for all $a\in K$ \cite[(2.1), p.~3]{LLLM},
 i.e. $f(t)a=af(t)$ for all $a\in K$ and
 so  $f$ is semi-invariant, too, and hence the right nucleus of $(K,\delta,d)$ contains $K$ by Proposition \ref{prop:new(1)}.
By \cite{LLLM}, $f$ is two-sided iff $f$ is semi-invariant and $ft\in Rf$. Here, $f$ is not two-sided, therefore
$ft\not \in Rf$, which implies that the powers of $t$ are not associative in $(K,\delta,d)$ by Theorem
\ref{Properties of S_f petit}.
\\ (ii) For $a,d\in K$ and $G=G_{id,-a}$ we have
$G(f(t))=f(t)- V(a)$ and $S_f\cong S_{G(f)}$.
\end{proof}

In fact, for $f(t)=g(t)-d\in F[t]$ (i.e. here $f(t)$ is two-sided),
$$(K,\delta,d)=K[t;\delta]/K[t;\delta] f(t)$$
is an associative central simple $F$-algebra called a \emph{differential extension of $K$} and treated in \cite[p.~23]{J96}.
 Then $K$ is a maximal subfield of $(K,\delta,d)$. 
Note that  $(K,\delta,d)$ contains the subring $F[t]/(f(t))$,
which is a field extension of $F$ of degree $p^e$ whenever $f(t)$ is irreducible in $F[t]$ (thus a maximal subfield
of $(K,\delta,d)$).

\begin{remark}\label{re:9}
 In the special case where $g(t)=t^p-t$ and $f(t)=t^p-t-d\in F[t]$, the automorphism group of
 $(K,\delta,d)$ has a cyclic subgroup of order $p$  generated by $G_{id,-1}$ which leaves $K$ invariant.
 If $f(t)=t^p-t-d$ is irreducible then the division algebra
 $(K,\delta,d)$ is called a \emph{cyclic extension of $K$} of degree $p$ by Amitsur,
 as it can be seen as a noncommutative generalization of
 a cyclic field extension of $K$: it  has dimension $p$ as a $K$-vector space and
 the automorphism group of $(K,\delta,d)$ has a cyclic subgroup of order $p$.
 All cyclic extensions of $K$ of degree $p$ are of this form  \cite{Am2}.
Note that they always contain the cyclic separable field extension $F[t]/(t^p-t-d)$ of degree $p$.
\end{remark}

\ignore{
  The index (for $A=Mat_n(D)$, index of $A$ is degree of $D$) of the differential extension $(K,\delta,d)$ is the smallest power $p^q$ such that
  $(d-z)^{p^q}=n(g)$ for some $g$ \cite[p.~31]{J96}.
\fbox{out or explain?}

}

 When $g(t)=t^p-t\in F[t]$ and $f(t)=t^p-t-d\in K[t;\delta]$, $d\in K\setminus F$ is irreducible,
the nonassociative division algebra $(K,\delta,d)$ is a canonical, nonassociative, generalization of
a cyclic extension:

 \begin{theorem} \label{thm:nonasscyclic}
 Let  
 $\delta$ be of degree $p$ with minimum polynomial $g(t)=t^p-t\in F[t].$
Let $f(t)=t^p-t-d\in K[t;\delta]$. Then
$$(K,\delta,d)=K[t;\delta]/K[t;\delta] f(t)$$
 is a nonassociative algebra over $F$ of dimension $p^{2}$, and is a division algebra if and only if
$$V_p(z)-z\not=d$$
for all $z\in K$, if and only if
$$z^p+\delta^{p-1}(z)-z\not=d$$
 for all $z\in K$.
 ${\rm Aut}_F(S_f)$ has a cyclic subgroup of order $p$ generated by $G=G_{id,-1}$, i.e. $G|_K=id_K$.
\end{theorem}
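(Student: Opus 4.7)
The plan is essentially to assemble results already established earlier in the paper; there is no real new work to do, only careful bookkeeping.

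First I would read off the dimension directly from Theorem \ref{thm:9}: here the minimum polynomial of $\delta$ has degree $p^e = p$ and $f$ has degree $m = p$, so $S_f$ has dimension $m p^e = p^2$ over $F$. This also guarantees that $S_f$ is finite-dimensional over $F$, which is the hypothesis needed to invoke the converse in Theorem \ref{Properties of S_f petit}(iv): $S_f$ is a division algebra if and only if $f$ is irreducible.

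Next I would reduce irreducibility of $f(t) = t^p - t - d$ to the explicit condition on $V_p$. By Lemma \ref{le:Am}(ii), $f$ is irreducible in $K[t;\delta]$ if and only if $V_p(z) - z \neq d$ for all $z \in K$. Since $K$ is commutative, every $z \in K$ trivially commutes with its derivatives, so the simplified formula $V_p(z) = z^p + \delta^{p-1}(z)$ recorded in the characteristic-$p$ preliminaries applies and turns the criterion into $z^p + \delta^{p-1}(z) - z \neq d$ for all $z \in K$. This gives the two equivalent nondivision obstructions in the statement.

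Finally, for the automorphism subgroup, I would invoke Lemma \ref{le:iso_order_p} directly: it says that for $f(t) = t^p - t - d$, the map $G_{id,-1}$ is an automorphism of $S_f$ of order $p$, and by its very definition $G_{id,-1}$ acts as the identity on $D = K$ and sends $t$ to $t-1$. Thus $\langle G_{id,-1}\rangle$ is a cyclic subgroup of $\mathrm{Aut}_F(S_f)$ of order $p$ with $G|_K = \mathrm{id}_K$.

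The main (very mild) obstacle is just checking that the formula $V_p(z) = z^p + \delta^{p-1}(z)$ applies verbatim here — i.e. that the extra commutator correction $\ast$ in the general $V_p$-formula vanishes. This is immediate because $K$ is commutative, so no computational work is required beyond citing the preliminaries.
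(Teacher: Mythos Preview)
Your proposal is correct and follows essentially the same route as the paper: cite Lemma \ref{le:Am}(ii) for the irreducibility criterion, note that commutativity of $K$ simplifies $V_p(z)$ to $z^p+\delta^{p-1}(z)$, and invoke Lemma \ref{le:iso_order_p} for the cyclic subgroup. You are slightly more explicit than the paper in citing Theorem \ref{thm:9} for the dimension and Theorem \ref{Properties of S_f petit}(iv) for the ``division iff irreducible'' equivalence, but these are exactly the background facts the paper is tacitly using.
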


\begin{proof}
Here  $f(t)=t^p-t-d\in K[t;\delta]$ is irreducible if and only if for all $z\in K$,
$V_p(z)-z\not=d$ by Lemma \ref{le:Am} (ii). Since $K$ is commutative, the second equivalence is clear.
The remaining assertion follows from Lemma \ref{le:iso_order_p}.
\end{proof}

If $d\in F$ then  $S_f=(K,\delta,d)$ is the cyclic extension of
$F$ of degree $p$ in Remark \ref{re:9}. As a corollary of Theorem \ref{thm:nonasscyclic} we obtain a canonical construction method for
\emph{nonassociative cyclic extensions} of $K$,
if we define these algebras as division algebras containing $K$ which are $K$-vector spaces of dimension $p$ and
 whose automorphism group contains a cyclic subgroup of order $p$ which leaves $K$ invariant:

\begin{theorem}\label{thm:strongest}
Let $\delta$ be of degree $p$ with minimum polynomial $g(t)=t^p-t\in F[t].$ For all
 $f(t)=t^p-t-d\in K[t;\delta]$ with $d\in K\setminus F$,
 $(K,\delta,d)$ is a unital nonassociative division algebra over $F$ of dimension $p^{2}$.
 Its left and middle nucleus is $K$, its right nucleus contains $K$, and its automorphism group  contains a cyclic subgroup of order $p$
which leaves $K$ invariant.
\end{theorem}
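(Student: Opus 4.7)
The plan is to reduce everything to the irreducibility criterion of Theorem \ref{thm:nonasscyclic} and then assemble the remaining structure claims from results already proved. The only place where genuine work is needed is showing that the map
$$W\colon K\to K,\qquad W(z)=z^p+\delta^{p-1}(z)-z,$$
avoids every value $d\in K\setminus F$.

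To do this I would prove that $W(K)\subseteq F$. Applying $\delta$ term by term and using $\delta(z^p)=pz^{p-1}\delta(z)=0$ in characteristic $p$, one gets
$$\delta(W(z))=\delta^p(z)-\delta(z).$$
The hypothesis on $\delta$ says that $g(t)=t^p-t$ is its minimum polynomial as an $F$-linear operator on $K$, so $g(\delta)=\delta^p-\delta$ vanishes on $K$. Hence $\delta(W(z))=0$ for all $z\in K$, i.e.\ $W(z)\in\mathrm{Const}(\delta)=F$. Because $d\notin F$ this forces $W(z)\neq d$ for every $z\in K$, so Theorem \ref{thm:nonasscyclic} (via Lemma \ref{le:Am}(ii)) makes $f(t)=t^p-t-d$ irreducible and $(K,\delta,d)=S_f$ a division algebra.

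Once that is in hand, the remaining assertions are immediate consequences of earlier results. The dimension $p^2$ over $F$ and the equalities $\mathrm{Nuc}_l(S_f)=\mathrm{Nuc}_m(S_f)=K$ (valid because $d\in K\setminus F$ makes $(K,\delta,d)$ nonassociative, so $f$ is not two-sided) come from Theorem \ref{thm:9}. Proposition \ref{le:1}(i) gives $K\subseteq\mathrm{Nuc}_r(S_f)$, and Lemma \ref{le:iso_order_p} supplies the cyclic subgroup of order $p$ generated by $G_{id,-1}$, which acts as the identity on $K$ by construction, finishing the proof.

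I do not anticipate any real obstacle here: the only substantive step is the derivation computation showing $W(K)\subseteq F$, which uses exactly the two facts from the setup (characteristic $p$ and $g(\delta)=0$ on $K$). The rest is bookkeeping that strings together Theorems \ref{Properties of S_f petit}, \ref{thm:9}, \ref{thm:nonasscyclic}, Proposition \ref{le:1}, and Lemma \ref{le:iso_order_p}.
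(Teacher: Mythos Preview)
Your proposal is correct and follows essentially the same route as the paper: both apply $\delta$ to the equation $z^p+\delta^{p-1}(z)-z=d$, use $\delta(z^p)=pz^{p-1}\delta(z)=0$ and $\delta^p=\delta$ (from $g(\delta)=0$) to conclude $\delta(d)=0$, contradicting $d\notin F$. The remaining structural claims are assembled from the same earlier results (Theorem~\ref{thm:9}, Proposition~\ref{le:1}, Lemma~\ref{le:iso_order_p}/Theorem~\ref{thm:nonasscyclic}), so there is no meaningful difference in approach.
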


\begin{proof}
Suppose there is $z\in K$ such that $z^p+\delta^{p-1}(z)-z=d$. Apply $\delta$ to both sides to obtain
$\delta(z^p)+\delta^{p}(z)-\delta(z)=\delta(d)$, which means
 $\delta(z^p)=\delta(d)$ since $\delta^p=\delta$ here.
Now $\delta^p(z)=pz^{p-1}\delta(z)=0$ implies that $\delta(d)=0$ and hence the first assertion since
$d\not\in F$ by Lemma \ref{le:Am} (ii).
The right nucleus contains $K$ by Proposition \ref{le:1} and the remaining assertion follows from Theorem \ref{thm:nonasscyclic}.
\end{proof}



\begin{example}\label{ex:1}
Let $\delta$ have minimum polynomial $g(t)=t^p-t\in F[t].$
Let $x$ be an indeterminate and $\delta$ be the extension of $\delta$ to $K(x)$ via
$\delta(x)=0$. Clearly ${\rm Const}(\delta)=F(x)$ and $g(t)=t^p-t\in F(x)[t]$
is the minimal polynomial of the extended derivation $\delta$. Then  for all
$h(x)\in K(x)\setminus F(x)$, $f(t)=t^p-t-h(x)$ is irreducible and hence
$$(K(x),\delta,h(x))$$
 is a unital nonassociative division algebra over $F(x)$
of dimension $p^{2}$, and a nonassociative cyclic extension of $K(x)$.
This generalized \cite[Proposition 1.9.10]{J96}.
\end{example}

When $F$ has characteristic 3, using Lemma \ref{le:Am} and Proposition \ref{prop:1} we can generalize Theorem \ref{thm:strongest} slightly:

\begin{theorem}\label{thm:strongestII}
Let $F$ have characteristic 3 and $\delta$ be of degree $3$ with minimum polynomial $g(t)=t^p-ct\in F[t].$ For all
 $f(t)=t^p-ct-d\in K[t;\delta]$ with $d\in K\setminus F$,
 $(K,\delta,d)$ is a nine-dimensional unital nonassociative division algebra over $F$.
 Its left and middle nucleus is $K$, its right nucleus contains $K$ and
 $\{\delta(c)/c\,|\, c\in K \}$ is isomorphic to the subgroup $\{G_{id,-a}\,|\, a\in C \text{ with }
V(a)=0\}$ of ${\rm Aut}_F((K,\delta,d))$.
\end{theorem}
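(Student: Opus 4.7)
The plan is to mirror the proof of Theorem \ref{thm:strongest}, replacing the irreducibility test of Lemma \ref{le:Am}(ii) by its characteristic-$3$ refinement Lemma \ref{le:Am}(iii), since here the coefficient of $t$ is a general $c\in F$ rather than $-1$.

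First I would invoke Theorem \ref{thm:9} to conclude that $S_f = (K,\delta,d)$ is an $F$-algebra of dimension $m p^{e} = 3\cdot 3 = 9$ and is a division algebra precisely when $f(t) = t^{3} - ct - d$ is irreducible. Since $K$ is commutative and $c\in F$, we have $zc = cz$ and $\delta(c) = 0$, so the two conditions of Lemma \ref{le:Am}(iii) collapse to the single requirement $V_{3}(z) - cz \ne d$ for all $z\in K$. Commutativity of $K$ also kills the commutator term in the general formula for $V_{p}$, so $V_{3}(z) = z^{3} + \delta^{2}(z)$.

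Next I would argue by contradiction, in direct analogy with Theorem \ref{thm:strongest}. Suppose some $z\in K$ satisfies $z^{3} + \delta^{2}(z) - cz = d$. Applying $\delta$ to both sides and using (a) $\delta(z^{3}) = 3 z^{2}\delta(z) = 0$ in characteristic $3$, (b) $\delta(c) = 0$ because $c\in F$, and (c) $\delta^{3} = c\delta$ on $K$ (which is exactly the assertion that $g(t) = t^{3} - ct$ is the minimum polynomial of $\delta$ on the commutative ring $K$, so $g(\delta)|_K = 0$), every term on the left cancels and we are left with $\delta(d) = 0$. Hence $d \in \mathrm{Const}(\delta) = F$, contradicting $d\in K\setminus F$. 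Thus $f$ is irreducible and $(K,\delta,d)$ is a division algebra.

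Finally the structural assertions are immediate from earlier results. The polynomial $g$ is semi-invariant as the minimum polynomial of the quasi-algebraic derivation $\delta$, hence so is $f = g - d$, so the right nucleus contains $K$ by Proposition \ref{prop:new(1)}. Since $d \notin F$, $f$ is not two-sided, so by Proposition \ref{le:1}(i) the left and middle nuclei equal $K$ exactly. The automorphism statement is Proposition \ref{prop:1} applied directly to the $p$-polynomial $f(t) = g(t) - d$: the bijection $a \mapsto G_{\mathrm{id},-a}$ identifies $\ker V = \{\delta(z)/z : z\in K\}$ with the claimed subgroup of $\mathrm{Aut}_F((K,\delta,d))$. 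The only substantive step is the short cancellation argument producing $\delta(d) = 0$; I do not expect any genuine obstacle beyond carefully tracking that the three facts $3 = 0$, $\delta(c) = 0$, and $g(\delta)|_K = 0$ conspire to make it go through.
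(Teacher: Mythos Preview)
Your proof is correct and follows essentially the same approach as the paper: both invoke Lemma~\ref{le:Am}(iii), apply $\delta$ to a hypothetical solution to force $\delta(d)=0$ and hence a contradiction with $d\in K\setminus F$, and then quote Propositions~\ref{le:1} and~\ref{prop:1} for the nucleus and automorphism statements. Your observation that the two conditions of Lemma~\ref{le:Am}(iii) collapse to one (since $K$ is commutative and $c\in F$ gives $zc=cz$ and $\delta(c)=0$) is a small streamlining; the paper verifies both cases separately, though they are indeed identical in this setting.
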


\begin{proof}
Suppose there is $z\in K$ such that $z^3+\delta^{2}(z)-cz=d$. Apply $\delta$ to both sides to obtain
$\delta(z^3)+\delta^{p}(z)-c\delta(z)=\delta(d)$.
Now $\delta^3(z)=0$ and  $\delta^3=c\delta$  implies that $0=\delta(d)$, a contradiction.
Next assume that that there is $z\in K$ such that $z^3+\delta^{2}(z)-cz+\delta(c)=d$. Apply $\delta$ to both sides to obtain
$\delta(z^3)+\delta^{p}(z)-c\delta(z)++\delta^2(c)=\delta(d)$, i.e. again that $0=\delta(d)$, a contradiction.
Thus $f$ is irreducible by Lemma \ref{le:Am} (iii).
The right nucleus contains $K$ by Proposition \ref{le:1} and the assertion follows.
\end{proof}

\begin{example}\label{ex:3}
Let $F$ have characteristic 3 and $\delta$ be of degree $3$ with minimum polynomial $g(t)=t^3-ct\in F[t].$
Let $x$ be an indeterminate and $\delta$ be the extension of $\delta$ to $K(x)$ via
$\delta(x)=0$. Clearly ${\rm Const}(\delta)=F(x)$ and $g(t)=t^3-ct\in F(x)[t]$
is the minimal polynomial of the extended derivation $\delta$. Then  for all
$h(x)\in K(x)\setminus F(x)$, $f(t)=t^3-ct-h(x)$ is irreducible and so
$$(K(x),\delta,h(x))$$
 is a unital nine-dimensional nonassociative division algebra over $F(x)$.
This again generalizes \cite[Proposition 1.9.10]{J96}.
\end{example}

%
%

\section{Nonassociative differential extensions of a division algebra} \label{sec:4}

\subsection{} Let $C$ be a field of  characteristic $p$ and $D$  a central division algebra over $C$ of degree $n$.
Let  $\delta$ be a derivation of $D$, such that $\delta|_C$ is algebraic with minimum polynomial
$$g(t)=t^{p^e}+c_1t^{p^{e-1}}+\dots+ c_et\in F[t]$$
 of degree $p^e$ and $F={\rm Const}(\delta)$ as in Section \ref{sec:2}.

 \begin{definition}
For all $f(t)=g(t)-d\in D[t;\delta] $, the $F$-algebra
$$(D,\delta,d)=S_f=D[t;\delta]/D[t;\delta] f(t)$$
  is called a \emph{(nonassociative) generalized differential algebra}.
\end{definition}

$(D,\delta,d)$ is a unital nonassociative algebra over $F$ of dimension $p^{2e}n^2$ and free
of rank $p^e$  as a left $D$-module, and contains $D$
as a subalgebra. For $d\in D\setminus F$ it has left and middle nucleus $D$.

\begin{lemma} \label{le:2}
For $d\in D\setminus F$, the right nucleus of $(D,\delta,d)$ does not contain $D$, thus ${\rm Nuc}((D,\delta,d))$
is properly contained in $D$.
\\ If $d\in C\setminus F$, then $(C,\delta|_C,d)$ is a subalgebra of $(D,\delta,d)$ and the
 right nucleus of $(D,\delta,d)$ contains $C$, thus $C\subset {\rm Nuc}((D,\delta,d))$.
\end{lemma}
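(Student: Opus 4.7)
The plan for the first assertion is to apply Proposition~\ref{prop:new(1)} and show that $f(t)=g(t)-d$ is not semi-invariant. I would compute $f(t)a$ for an arbitrary $a\in D$ using two characteristic-$p$ facts: the coefficients of $g$ lie in $F_0=F\cap C$, so they commute with $a$; and $t^{p^i}a=at^{p^i}+\delta^{p^i}(a)$, because the intermediate binomials $\binom{p^i}{k}$ vanish. Together they yield
\[
g(t)a=ag(t)+g(\delta)(a)=ag(t)+d_0a-ad_0,
\]
and hence $f(t)a=ag(t)+d_0a-ad_0-da$. A putative identity $f(t)a=\tau(a)f(t)$ forces $\tau(a)=a$ by comparing the $t^{p^e}$ coefficients, and the remainder reduces to $(d-d_0)a=a(d-d_0)$ for all $a\in D$, i.e.\ $d-d_0\in Z(D)=C$. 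With $d_0\in F$ normalised as in Section~\ref{sec:2}, the hypothesis $d\in D\setminus F$ excludes $d-d_0\in C$, so $f$ fails to be semi-invariant and $D\not\subset{\rm Nuc}_r((D,\delta,d))$; the nucleus, being smaller, is then also properly contained in $D$.

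For the second assertion, assume $d\in C\setminus F$. Since $g\in F_0[t]$ and $d\in C$, the polynomial $f$ lies in $C[t;\delta|_C]\subset D[t;\delta]$. As $f$ is monic, right division by $f$ in $D[t;\delta]$ is unique, so any element of $C[t;\delta|_C]$ that is a right multiple of $f$ in $D[t;\delta]$ is already a right multiple of $f$ in $C[t;\delta|_C]$; consequently $D[t;\delta]f\cap C[t;\delta|_C]=C[t;\delta|_C]f$. The inclusion therefore descends to an injective algebra homomorphism $(C,\delta|_C,d)\hookrightarrow(D,\delta,d)$, identifying the source with a subalgebra.

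For the inclusion $C\subset{\rm Nuc}_r((D,\delta,d))$ I appeal to Theorem~\ref{Properties of S_f petit}(i): $c\in{\rm Nuc}_r(S_f)$ iff $fc\in Rf$. For $c\in C$, the minimality of $g$ gives $g(\delta)(c)=0$, so the earlier display collapses to $g(t)c=cg(t)$; combined with $cd=dc$ (valid since $d\in C$) this yields $f(t)c=cf(t)\in Rf$, placing $c$ in the right nucleus. The main obstacle is the step in the first paragraph where $d\notin F$ is used to force $d-d_0\notin C$; this relies on the normalisation of $d_0$ in $F$ (and on $d_0\notin C$, i.e.\ on $g(\delta)$ being genuinely inner on $D$), and is where the argument is most delicate. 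The second assertion is, by contrast, a direct book-keeping exercise with polynomials in the subring $C[t;\delta|_C]$.
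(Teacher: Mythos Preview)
Your overall strategy coincides with the paper's: reduce the first assertion to Proposition~\ref{prop:new(1)} by showing $f=g(t)-d$ is not semi-invariant, via an explicit computation of $f(t)a$; and handle the second assertion by observing that $f\in C[t;\delta|_C]$ and that $f(t)c=cf(t)$ for $c\in C$. The second part of your argument is correct and essentially identical to the paper's.

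In the first part there is a genuine gap, precisely at the step you yourself flag as ``most delicate''. From $d_0\in F$ and $d\in D\setminus F$ you cannot conclude $d-d_0\notin C$. A concrete obstruction: pick any $c\in C$ with $\delta(c)\neq 0$ (such $c$ exist since $\delta|_C$ is non-trivial) and set $d=d_0+c$. Then $d-d_0=c\in C$, while $\delta(d)=\delta(d_0)+\delta(c)=\delta(c)\neq 0$, so $d\notin F$. Thus your implication fails. Your parenthetical attempt to rescue it by assuming $d_0\notin C$ does not help either: the same $d=d_0+c$ still has $d-d_0\in C$ regardless of whether $d_0$ is central.

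The paper's argument differs from yours exactly here: rather than computing $g(t)a=ag(t)+[d_0,a]$, it invokes \cite[(2.1)]{LLLM} to assert directly that the minimal monic semi-invariant polynomial satisfies $g(t)a=ag(t)$ for all $a\in D$ (so the commutator term is absent and the semi-invariance condition reduces to $d\in C$, not $d-d_0\in C$). Even so, the paper's final line (``$da=ad$ for all $a$'' contradicting ``$d\in D\setminus F$'') suffers from the same logical slip, since $C\not\subset F$. The clean statement both arguments actually prove is that $D\not\subset{\rm Nuc}_r((D,\delta,d))$ when $d\in D\setminus C$; together with the second part (which covers $d\in C\setminus F$) this does account for all $d\in D\setminus F$, but with different conclusions in the two ranges. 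If you want your write-up to be self-contained and correct, either strengthen the hypothesis in the first paragraph to $d\in D\setminus C$, or note explicitly that the case $d\in C\setminus F$ is handled by the second paragraph (where you in fact show $C\subset{\rm Nuc}_r$, so the right nucleus \emph{does} contain $C$ but may still fail to contain all of $D$ only when $D\neq C$).
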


\begin{proof}
 Since $g$ is semi-invariant and monic of minimal degree, we have $g(t)a=ag(t)$ for all $a\in D$ \cite[(2.1), p.~3]{LLLM},
 i.e. $f(t)a=ag(t)-da$ for all $a\in D$ and so $f$ is not semi-invariant, since this would mean that
$da=ad$ for all $a\in D$ and we assumed $d\in D\setminus F$.
Hence the right nucleus of $(K,\delta,d)$ does not contain $D$ by Proposition \ref{prop:new(1)}.

If $d\in C\setminus F$, then $f\in C[t,\delta]=C[t;\delta|_C]$ is semi-invariant in  $C[t,\delta]$ and
$\delta|_C$ is an algebraic derivation on $C$ with minimum polynomial $g(t)$
 of degree $p^e$. Since $d\in C$, $f$ is semi-algebraic in $C[t,\delta]$, see the proof of Lemma \ref{le:1}.
 Thus for every $a\in C$ 
 we have $f(t)a\in C[t;\delta] f\subset Rf$ and hence $C\subset 
 {\rm Nuc}_r((D,\delta,d))$.
\end{proof}

\begin{proposition} \label{le:cong}
 For all $d\in D$ and $a\in C$,
 $$(D,\delta,d)\cong (D,\delta,d-V(a)).$$
\end{proposition}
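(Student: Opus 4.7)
The plan is to realize both $(D,\delta,d)$ and $(D,\delta,d-V(a))$ as Petit algebras whose defining polynomials are related by an $F$-automorphism of $R=D[t;\delta]$. Writing $f(t)=g(t)-d$ so that $(D,\delta,d)=S_f$, I would use the map $G=G_{id,a}$ that fixes $D$ pointwise and sends $t\mapsto t+a$, for $a\in C$. By the automorphism criterion recalled in Section~\ref{sec:prel}, $G$ is an $F$-automorphism of $R$ precisely because $a\in C=C(D)$ forces the commutator condition $\delta(z)-\delta(z)=az-za$ to hold trivially on $D$.

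The next step is to compute $G(f)$. Since $C$ is $\delta$-stable (differentiating $az=za$ shows $\delta(a)\in C$, and hence $\delta^i(a)\in C$ for all $i$), the element $a$ commutes with all of its derivatives, so the identity $(t-b)^{p^i}=t^{p^i}-V_{p^i}(b)$ from Section~\ref{sec:prel} applies with $b=-a$ and, after summing against the coefficients of $g$, yields $g(t+a)=g(t)+V(a)$, where I use that $V$ is an additive map on $C$ so that $V(-a)=-V(a)$. Equivalently, this is the general $p$-polynomial identity $f(t)-f(t-b)=V_f(b)$ applied to our $f$, for which $V_f=V_g=V$ (the constant term $-d$ contributes nothing to $V_f$). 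Therefore $G(f(t))=f(t+a)=g(t)-(d-V(a))$.

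Finally, since any $F$-automorphism $H$ of $R$ canonically induces an $F$-algebra isomorphism $S_f\cong S_{H(f)}$, I conclude $(D,\delta,d)=S_f\cong S_{G(f)}=(D,\delta,d-V(a))$. This is essentially a direct specialization of the unnumbered proposition preceding Proposition~\ref{prop:1} to $f(t)=g(t)-d$, combined with the sign flip $a\leftrightarrow -a$ afforded by the additivity of $V$ on $C$. There is no serious obstacle; the only care required is in matching sign conventions so that the shift by $a$ produces $-V(a)$ rather than $+V(a)$ on the constant side.
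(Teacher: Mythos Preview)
Your proof is correct and follows essentially the same route as the paper, whose proof simply reads ``analogous to the one of Proposition~\ref{le:1}~(ii)'': one applies the $F$-automorphism $G_{id,\pm a}$ of $R=D[t;\delta]$ (which exists precisely because $a\in C(D)=C$), computes $G(f(t))=f(t)\mp V(a)$ via the $p$-polynomial identity, and invokes $S_f\cong S_{G(f)}$. Your choice of $G_{id,a}$ rather than the paper's $G_{id,-a}$ is just the sign swap you already noted, made harmless by the additivity of $V$ on $C$.
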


\begin{proof} The proof is analogous to the one of Proposition \ref{le:1} (ii).
\end{proof}

$(D,\delta,d)$ is associative if and only if $d\in F$ and a division algebra if and only if $f(t)$ is irreducible.
For $f(t)=g(t)-d\in F[t]$, the associative $F$-algebra
$$(D,\delta,d)=S_f=D[t;\delta]/D[t;\delta] f(t)$$
   is a central simple algebra over $F$
and called a \emph{generalized differential extension of $D$} in \cite[p.~23]{J96}.
The defining relations characterizing the associative algebra $(D,\delta,d)$  are given by
$$ta=at+\delta(a) \text{ and } t^{p^e}+c_1t^{p^{e-1}}+\dots+ c_et=d$$
for all $a\in D$  \cite[p.~23]{J96}. Moreover, the central simple
algebra $(D,\delta,d)$ contains $D$ as the centralizer of $C$ \cite[Theorem 3.1]{Hoe} and
Proposition \ref{le:cong}  for $d\in F$ was proved in \cite[Theorem 3.2]{Hoe}.

In the special case where  $g(t)=t^p-t$ and hence $f(t)=t^p-t-d\in F[t]$,
the automorphism group of the central simple
algebra $(D,\delta,d)$ of degree $n^2p^2$ has a cyclic subgroup of order $p$ generated by $G_{id, -1}$ which leaves $D$ invariant.
If $f$ is irreducible then the division algebra $(D,\delta,d_0)$ is also called a \emph{cyclic extension of $D$}
of degree $p$ by Amitsur,
 as it is also free of rank $p$ as a right $D$-module and thus can be seen as canonical generalization of
 a cyclic field extension. All cyclic extensions of $D$ of degree $p$ are of this form \cite{Am2}.

Note that if $f(t)=t^p-t-d\in F[t]$ is irreducible, then $(D,\delta,d)$ also
 contains the cyclic field extension $F[t]/(t^p-t-d)$ of dimension $p$ over $F$ as a subfield.

\begin{theorem}\label{thm:last}
Let $\delta$ have minimum polynomial
$$g(t)=t^p-t\in F[t].$$
Then for all $f(t)=t^p-t-d\in D[t;\delta]$,
$$(D,\delta,d)=D[t;\delta]/D[t;\delta]f(t)$$
is a nonassociative algebra over $F$ of dimension $n^2p^2$ and a division algebra  if and only if
$$d\not=V_p(z)-z$$
for all $z\in D$, if and only if
$$d\not=(t-z)^p-t^p-z$$
 for all $z\in D$. $(D,\delta,d)$ is associative if and only if $d\in F$.

 ${\rm Aut}_{F}((D,\delta,d))$ has a cyclic subgroup of order $p$ generated by $G=G_{id,-1}$, i.e. $G|_D=id_D$.
\end{theorem}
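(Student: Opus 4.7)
The plan is to assemble pieces already developed earlier in the paper. First I would read off the dimension from Theorem \ref{thm:mainI} (i), specialised to $e=1$ and $m=\deg(f)=p$: this gives $\dim_F (D,\delta,d) = m\,n^2\,p^e = n^2 p^2$, and confirms that for $d \in D \setminus F$ the algebra is genuinely nonassociative with the nucleus structure already indicated.

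For the division-algebra criterion, I would combine Theorem \ref{Properties of S_f petit} (iv) with Lemma \ref{le:Am} (ii). Since $(D,\delta,d)$ is finite dimensional over $F$, it is a division algebra iff $f(t)$ is irreducible; Lemma \ref{le:Am} (ii) rephrases irreducibility of $t^p - t - d$ as $V_p(z) - z \neq d$ for all $z \in D$, giving the first stated form. The second form is the same condition transcribed via the characteristic-$p$ identity $(t-z)^p = t^p - V_p(z)$ recorded in the preliminaries, which allows $V_p(z)-z$ to be written as a polynomial expression in $(t-z)^p$, $t^p$ and $z$.

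For associativity, I would invoke Theorem \ref{Properties of S_f petit} (v), so that $(D,\delta,d)$ is associative iff $f$ is two-sided, and then run a short direct calculation. Using that the centre of $R = D[t;\delta]$ is $F[g(t)-d_0]$, the identity $g(t)a - ag(t) = d_0 a - a d_0$ holds for every $a \in D$; subtracting $da = ad$ gives $f(t)a - af(t) = [d_0 - d,\,a]$, so $f$ is semi-invariant precisely when $d - d_0$ is central in $D$. A parallel direct computation yields $ft - tf = \delta(d)$, from which $ft \in Rf$ holds iff $\delta(d) = 0$, i.e.\ $d \in F$. Under the normalisation used throughout Section \ref{sec:4} (and recalling the link to \cite[p.~23]{J96}), these combine to associativity iff $d \in F$.

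Finally, the cyclic subgroup of automorphisms is essentially free: Lemma \ref{le:iso_order_p} provides $G = G_{\mathrm{id},-1} \in \mathrm{Aut}(S_f)$ of order $p$, with $G|_D = \mathrm{id}_D$ by construction, and the fact that $G(f(t)) = f(t)$ holds here because $\delta^{p-1}(1) + 1^p - 1 = 0$; so $G$ passes to $\mathrm{Aut}_F((D,\delta,d))$ and generates a cyclic subgroup of order $p$. The only non-trivial piece of work is the semi-invariance/two-sided calculation for the associativity part; the other three claims are direct specialisations of results already established in the preliminaries and in Section \ref{sec:2}.
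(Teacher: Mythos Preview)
Your proposal is correct and follows essentially the same route as the paper's own proof, which is very terse: it simply cites Lemma~\ref{le:Am}~(ii) for the division criterion and Lemma~\ref{le:iso_order_p} for the cyclic subgroup, leaving the dimension and the second formulation of the division condition implicit. Your treatment is a faithful expansion of that, with the identity $(t-z)^p=t^p-V_p(z)$ supplying the translation between the two stated forms (compare also the proof of Theorem~\ref{thm:Jacobsontensor}, where the paper invokes \cite[(1.3.19)]{J96} for exactly this step).

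The one place where you do a bit more than the paper is the associativity claim. The paper does not prove this inside Theorem~\ref{thm:last}; it records ``$(D,\delta,d)$ is associative if and only if $d\in F$'' earlier in Section~\ref{sec:4} as a consequence of the known description of two-sided elements in $D[t;\delta]$ (they are of the form $u\,h(t)$ with $u\in D$ and $h(t)\in C(R)=F[g(t)-d_0]$, cf.\ \cite[Theorem~1.1.32]{J96}). Your explicit semi-invariance check $f(t)a-af(t)=[d_0-d,a]$ together with the computation $ft-tf=\delta(d)$ is a correct and slightly more hands-on route to the same conclusion; note that since $g(t)a=ag(t)$ for all $a\in D$ (see the proof of Lemma~\ref{le:2}), you can in fact skip the $d_0$-step and read off $f(t)a-af(t)=[a,d]$ directly.
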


\begin{proof}
We know that $S_f=(D,\delta,d)=D[t;\delta]/D[t;\delta]f(t)$ with $f(t)=t^p-t-d$
is a division algebra if and only if $d\not=V_p(z)-z$
 for all $z\in D$ by Lemma \ref{le:Am} (ii). The remaining assertion follows from Lemma \ref{le:iso_order_p}.
\end{proof}

This nicely generalizes \cite[Theorem 1.3.27]{J96} on cyclic extensions of $D$ whenever $f$ is irreducible.
We thus call unital nonassociative division algebras which contain $D$ as a subalgebra, are free of rank $p$
as a left $D$-module and have a cyclic subgroup  of automorphisms  of order $p$ which restrict to
$id_D$ on $D$, \emph{nonassociative cyclic extensions of $D$ of degree $p$}.

 In particular, if $f(t)\in C[t;\delta]$ in Theorem \ref{thm:last}, then $(D,\delta,d)$ contains the nonassociative
 cyclic extension
$(C,\delta,d)=C[t;\delta]/C[t;\delta]f$ of $C$ treated in Theorem \ref{thm:nonasscyclic} as a subalgebra of
dimension $p^{2}$ over $F$. This is a division subalgebra whenever $d\in C\setminus F$ by Theorem \ref{thm:strongest}.

Note also that for  $f(t)=t^p-t-d\in D[t;\delta]$ and all $a\in C$ we have
$$(D,\delta,d)\cong (D,\delta,d+\delta^{p-1}(a)+a^p-a)=(D,\delta,d+V_p(a)-a).$$

\begin{remark}
 Petit's construction of nonassociative algebras $S_f$ can be generalized to the setting where
$f\in S[t;\delta]$ is a monic polynomial and $S$ any unital associative ring \cite{P15.1}. Therefore
 some of the results
above also hold for nonassociative algebras obtained by employing $f(t)=t^p-t-d\in S[t;\delta]$ if $\delta$ satisfies
the polynomial identity $\delta^p=\delta$ as before. I.e., we can construct algebras which are free of rank $p$ as left $S$-modules whose automorphism group contains a cyclic subgroup of order $p$.
Amitsur's method of determining the (associative)
cyclic extensions of division rings $D$ was extended to simple rings $S$ already  in \cite{K}.
\end{remark}

When $F$ has characteristic 3, we can generalize Theorem \ref{thm:last} slightly, employing Lemma \ref{le:Am}
and and Proposition \ref{prop:1}:

\begin{theorem}\label{thm:lastII}
Let $\delta$ have minimum polynomial
$$g(t)=t^3-ct\in F[t].$$
Then for all $f(t)=t^3-ct-d\in D[t;\delta]$,
$$(D,\delta,d)=D[t;\delta]/D[t;\delta]f(t)$$
is a nonassociative unital algebra over $F$ of dimension $9n^2$ and a division algebra  if and only if
$$V_3(z)-cz\not=d  \text{ and } V_3(z)-zc-d+\delta(c)\not=0$$
 for all $z\in D$. $(D,\delta,d)$ is associative if and only if $d\in F$.
${\rm Aut}_{F}((D,\delta,d))$ has a subgroup isomorphic to $\{\delta(c)/c\,|\, c\in K \}$.
\end{theorem}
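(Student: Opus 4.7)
The plan is to obtain this theorem as a direct collage of results already proved, in complete analogy with the passage from Theorem \ref{thm:last} to its characteristic 3 variant. First, with $m=\mathrm{deg}\,f=3$ and $p^e=3$, Theorem \ref{thm:mainI}(i) gives $\mathrm{dim}_F(D,\delta,d)=mn^2p^e=9n^2$ and shows that $(D,\delta,d)$ is a division algebra precisely when $f$ is irreducible. Since $\mathrm{char}\,F=3$ and $f(t)=t^3-ct-d$, the irreducibility criterion is exactly Lemma \ref{le:Am}(iii); rewriting $V_3(z)-zc+\delta(c)\ne d$ as $V_3(z)-zc-d+\delta(c)\ne 0$ yields the two displayed conditions.

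For the associativity assertion, I would invoke Theorem \ref{Properties of S_f petit}(v): $(D,\delta,d)$ is associative iff $f$ is two-sided. By the description recalled at the start of Section \ref{sec:2}, the two-sided elements of $R=D[t;\delta]$ have the form $uh(t)$ with $u\in D$ and $h(t)\in C(R)=F[z]$, where $z=g(t)-d_0$ and $d_0\in F$ is chosen so that $g(\delta)=\mathrm{id}_{d_0}$. Since $f$ is monic of degree $p^e=\mathrm{deg}\,z$, two-sidedness forces $f\in F[z]$; writing $f=z+d_0-d$, this happens precisely when $d\in F$.

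For the automorphism subgroup, the polynomial $f=g(t)-d$ is of exactly the shape to which Proposition \ref{prop:1} applies, with $C$ the centre of $D$; that proposition directly furnishes the claimed isomorphism between $\{\delta(c)/c\mid c\in C\}$ and the subgroup $\{G_{\mathrm{id},-a}\mid a\in C,\ V(a)=0\}$ of $\mathrm{Aut}_F((D,\delta,d))$.

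There is essentially no real obstacle: every ingredient has already been established, and the characteristic 3 hypothesis intervenes only by forcing the use of Lemma \ref{le:Am}(iii) in place of part (ii), parallel to how Theorem \ref{thm:strongestII} refines Theorem \ref{thm:strongest}. The only bookkeeping required is the trivial algebraic rewriting of the irreducibility inequalities and the verification that $z=g(t)-d_0$ and $f$ have the same degree, so that two-sidedness of the monic $f$ collapses to membership in $F[z]$.
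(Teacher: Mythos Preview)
Your proposal is correct and follows essentially the same route as the paper. The paper does not supply a formal proof block for this theorem; it simply remarks, just before the statement, that the result is obtained ``employing Lemma \ref{le:Am} and Proposition \ref{prop:1}'', which is precisely what you invoke for the irreducibility criterion and the automorphism subgroup. Your additional paragraphs on the dimension count (via Theorem \ref{thm:mainI}(i)) and on associativity (via the description of two-sided elements at the start of Section \ref{sec:2}) merely spell out facts the paper had already recorded earlier in Section \ref{sec:4} for general $(D,\delta,d)$; you also correctly read the set $\{\delta(c)/c\mid c\in K\}$ in the statement as ranging over $c\in C=C(D)$, as Proposition \ref{prop:1} requires.
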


\subsection{}

 Let $D$ be a central division algebra over $C$ of degree $[D:F]=n$ and let $C$ have characteristic $p$. As a consequence
 of  \cite[(3)]{P66} we obtain
 the following partial generalization of \cite[Theorem 3]{Am2} which states when a nonassociative
 cyclic extension $S$ of $D$ of degree $p$ has the form discussed in Theorem \ref{thm:last}:

\begin{theorem} \label{thm:class}
 Let $S$ be a division ring with multiplication  $\circ$, which is not associative, such that
\\ (1)  $S$ has $D$ as subring, is a free left $D$-module of rank $p$, and there is $t\in S$ such that
 $t^0,t,t^2,\dots,t^{p-1}$ is a basis of $S$ over $D$, when defining $t^{i+1}=t\circ t^{i}$, $t^0=1$, for $0\leq i<p$;
\\ (2) for all $a\in D$, $a\not=0$, there is $a'\in D^\times$ such that $t\circ a=a\circ t+a'$;
\\ (3) for all $a,b,c\in D$, $i+j<p$, $k<p$, we have
$$[a\circ t^i,b\circ t^j,c\circ t^k]=0,$$
\\ (4) $t^p=t+d$ for some $d\in D^\times$ with  $t^{p}=t\circ t^{p-1}$ as above.
 Then
  $$\delta(a)=a'=t\circ a-a\circ t \quad (a\in D)$$
 is a derivation on $D$ and
  $$S\cong S_f$$
  with $f(t)=t^p-t-d \in D[t;\delta]$ irreducible.
 For any $H\in {\rm Aut}_{F}(S_f)$, $H|_D\in {\rm Aut}_{F}(D)$.
\\
If, in particular, $\delta|_C$ is algebraic with minimal polynomial $g(t)=t^p-t$ and $F={\rm Const}(\delta)$
 then $S$ is a nonassociative cyclic extension of $D$ of dimension $p^2[D:F]$ over $F$.
\end{theorem}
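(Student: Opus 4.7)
The strategy is to exhibit $S$ explicitly as a Petit algebra $S_f$ over $D[t;\delta]$ by reading off $\delta$ and $f$ from the hypotheses. First I would verify that $\delta(a) = t\circ a - a\circ t$, defined via (2), is a derivation on $D$. Additivity of $\delta$ follows from bilinearity of $\circ$. For the Leibniz rule, pick $a,b \in D$ and compute $\delta(ab) = t\circ(ab) - (ab)\circ t$; condition (3) applied with the index triples $(1,0,0)$, $(0,1,0)$, $(0,0,1)$ supplies enough associativity to write $t\circ(ab) = (t\circ a)\circ b$ and to compute
\[
(t\circ a)\circ b = (a\circ t + \delta(a))\circ b = a\circ(t\circ b) + \delta(a)b = a\circ(b\circ t + \delta(b)) + \delta(a)b = (ab)\circ t + a\delta(b) + \delta(a)b.
\]
This yields $\delta(ab) = a\delta(b)+\delta(a)b$, so $\delta$ is a derivation on $D$ and the differential polynomial ring $R = D[t;\delta]$ is defined.

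Next, form $f(t) = t^p - t - d \in R$ and let $R_p = \{g \in R \mid \deg g < p\}$. Define the $F$-linear map $\phi : R_p \to S$ by $\phi(\sum_{i<p} a_i t^i) = \sum_{i<p} a_i \circ t^i$. By hypothesis (1), $\phi$ is an $F$-linear bijection; the main task is to promote it to an algebra isomorphism $S_f \cong S$. The twisting relation $t\circ a = a\circ t + \delta(a)$ from (2) matches the relation $ta = at + \delta(a)$ in $R$, and condition (3) provides associativity for every product of basis monomials $a\circ t^i$ and $b\circ t^j$ with total $t$-exponent $i+j < p$, which is exactly the range required to push a product of elements of $R_p$ through to its unreduced form in $R$. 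Whenever $t^p$ appears during such a computation, condition (4) reduces it in $S$ to $t+d$, matching right division by $f$ in $R$. Carrying out this bookkeeping, which is the content of \cite[(3)]{P66}, yields $\phi(gh\,\,{\rm mod}_r\,f) = \phi(g)\circ \phi(h)$, hence $S \cong S_f$ as $F$-algebras.

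With the isomorphism $S \cong S_f$ in hand, the remaining assertions follow readily. Since $S$ is a division ring, so is $S_f$, and the converse of Theorem \ref{Properties of S_f petit}(iv) forces $f$ to be irreducible. Because $S$ is not associative, neither is $S_f$; Theorem \ref{Properties of S_f petit}(i) then identifies its left nucleus as $D$, so any $F$-automorphism of $S_f$ preserves the left nucleus and restricts to an $F$-automorphism of $D$. For the final clause, when $\delta|_C$ is algebraic with minimal polynomial $g(t) = t^p - t$ and $F = {\rm Const}(\delta)$, Theorem \ref{thm:last} presents $S_f = (D,\delta,d)$ as a nonassociative cyclic extension of $D$ of the stated dimension, whose automorphism group contains a cyclic subgroup of order $p$ acting as the identity on $D$.

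The main obstacle is the verification in the second paragraph: conditions (1)--(4) are calibrated to be precisely what is needed to transport the formal multiplication of $S_f$ across the bijection $\phi$, and the delicate point is to track when the degree bound $i+j < p$ in (3) suffices for the associativity needed to carry out a product term by term, and when one must instead invoke (4) to reduce a $t^p$ that has been produced, rather than stretching (3) past its range of validity.
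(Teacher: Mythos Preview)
Your proposal is correct and follows the same route as the paper, which does not give an explicit proof but presents the theorem as a direct consequence of \cite[(3)]{P66} (together with the remark after it identifying conditions (1)--(3) with Petit's hypotheses). You have simply unpacked that citation: verifying $\delta$ is a derivation from (2) and (3), building the bijection $\phi:R_p\to S$ and invoking Petit for multiplicativity, then reading off irreducibility of $f$ and the statement about automorphisms from Theorem~\ref{Properties of S_f petit} and the final clause from Theorem~\ref{thm:last}, exactly as the paper's internal references would dictate.
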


\begin{remark}
 Conditions (1), (2), (3) are equivalent to conditions (1), (2), (5), (6), (7)  with
\\ (5) $D\subset {\rm Nuc}_l(S)\cap {\rm Nuc}_m(S)$;
\\ (6) $t^i\circ b=t\circ(t^{i-1}\circ b)$ for all $b\in D$, $0\leq i<p$,
\\ (7) for $0\leq i,j,k<p$ and $i+j<p$, $k<p$, we have $[t^i,t^j,t^k]=0$ \cite[(3)]{P66}.
\end{remark}

An analogous result holds when $D=K$ is a  field of characteristic $p$ and we consider the setup as in Section \ref{sec:3}.

%
%

\section{Some tensor product constructions}\label{sec:tensor}

Let $E/F$ be a finite dimensional purely inseparable extension of exponent one and characteristic $p$
and $\delta$ a derivation on $E$ such that $F={\rm Const}(\delta)$. Then
$\delta$ is an algebraic derivation of degree $p^e$ with minimum polynomial
$$g(t)=t^{p^e}+c_1t^{p^{e-1}}+\dots+ c_et\in F[t]$$
of degree $p^e$, and $[E:F]=p^e$.

Let $D$ be an (associative) central division algebra over $F$
 such that $D_E=D\otimes_F E$ is a division algebra
 and let $\delta$ be the extension of $\delta$ to $D_E$ such that $\delta|_D=0$.
 Suppose that
$$S_f=E[t;\delta]/E[t;\delta]f(t)$$
with $f(t)\in E[t;\delta]$ of degree $m$, is a division algebra of dimension $mp^e$ over $F={\rm Const}(\delta)$
(i.e., that $f(t)\in E[t;\delta]$ is irreducible). Then the tensor product
$$S_f\otimes_F D=E[t;\delta]/E[t;\delta]f(t)\otimes_F D \cong D_E[t;\delta]/D_E[t;\delta]f(t)$$
is a nonassociative  algebra over $F$ 
 of dimension  $m p^{e} [D:F]$ and a division algebra if and only if $f(t)$ is irreducible in $D_E[t;\delta]$.
 We consider the following special case:

\begin{theorem} \label{thm:Jacobsontensor}
If $g(t)=t^p-t\in F[t]$ is  the minimal polynomial of $\delta$ and $f(t)=t^p-t-d\in E[t;\delta]$, then
$$(E,\delta,d)\otimes_F D\cong D_E[t;\delta]/D_E[t;\delta]f(t)$$
and
$$(E/F,\delta,c)\otimes_F D$$
is a  nonassociative division algebra over $F$  of dimension  $ p^{2} [D:F]$  if and only if
$$d\not=V_p(z)-z$$
  for all $z\in D_E$, if and only if
$$d\not=(t-z)^p-t^p-z$$
  for all $z\in D_E$.

   ${\rm Aut}_{F}((E/F,\delta,c)\otimes_F D)$ has a cyclic subgroup of order $p$ generated by $G=G_{id,-1}$, i.e. $G|_D=id_D$.
\end{theorem}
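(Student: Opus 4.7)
The plan is to reduce everything to Theorem~\ref{thm:last} applied to the division algebra $D_E$ in place of $D$: the hypothesis guarantees that $D_E$ is a division ring, and the extended derivation $\delta$ on $D_E$ restricts to $\delta|_E$ on the centre $E$ of $D_E$, with minimum polynomial $g(t)=t^p-t\in F[t]$, so the setup of Section~\ref{sec:4} applies directly. The isomorphism $(E,\delta,d)\otimes_F D\cong D_E[t;\delta]/D_E[t;\delta]f(t)$ is exactly the one already recorded in the paragraph preceding the theorem: because $\delta|_D=0$ and $D,E$ commute inside $D_E$, the natural $F$-linear map sending the class of $g(t)\otimes d$ on the left to the class of $g(t)d$ on the right preserves the Petit multiplication on both sides. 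A dimension count gives $\dim_F((E,\delta,d)\otimes_F D)=p\cdot[E:F]\cdot[D:F]=p^2[D:F]$, using that $g$ of degree $p$ forces $[E:F]=p$.

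Next, I would invoke Theorem~\ref{Properties of S_f petit}(iv) for the finite-dimensional Petit algebra $D_E[t;\delta]/D_E[t;\delta]f(t)$: it is a division algebra iff $f(t)=t^p-t-d$ is irreducible in $D_E[t;\delta]$. Lemma~\ref{le:Am}(ii) applies verbatim with $D_E$ in place of $D$ and yields the criterion $V_p(z)-z\neq d$ for every $z\in D_E$; the reformulation $d\neq(t-z)^p-t^p-z$ follows immediately from the identity $(t-z)^p=t^p-V_p(z)$ recalled in Section~\ref{sec:prel}.

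For the automorphism, I take $G=G_{id,-1}$ acting on $D_E[t;\delta]$ by $G|_{D_E}=\mathrm{id}$ and $G(t)=t-1$. Since $\delta(1)=0$, $G$ is an $F$-algebra automorphism of the skew polynomial ring by the criterion from Section~\ref{sec:prel}, and from $\delta^i(1)=0$ for $i\ge 1$ we obtain $V_p(1)=1$, whence $G(f(t))=(t-1)^p-(t-1)-d=t^p-1-t+1-d=f(t)$. Thus $G$ descends to an $F$-automorphism of the quotient, fixes $D$ pointwise, and has order exactly $p$ because $G^k(t)=t-k$; this yields the required cyclic subgroup. The whole argument is essentially a transcription of Lemma~\ref{le:iso_order_p} and Theorem~\ref{thm:last} with $D_E$ replacing $D$. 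The only point requiring genuine checking is that the isomorphism in the first paragraph respects the nonassociative multiplication rather than merely the $F$-vector space structure, and this reduces to the compatibility of the right-division algorithm for polynomials with coefficients in $E$ between $E[t;\delta]$ and $D_E[t;\delta]$, which is automatic from the explicit form of the division algorithm.
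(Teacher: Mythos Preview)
Your proposal is correct and follows essentially the same route as the paper's proof: the paper also invokes Lemma~\ref{le:Am}(ii) applied in $D_E[t;\delta]$ for the irreducibility criterion, uses the identity $(t-z)^p=t^p-V_p(z)$ (via \cite[(1.3.19)]{J96}) for the reformulation, and appeals to Lemma~\ref{le:iso_order_p} for the cyclic automorphism. Your write-up is simply more explicit about the tensor isomorphism and the dimension count, which the paper relegates to the paragraph preceding the theorem; note incidentally that substituting the identity gives $V_p(z)-z=t^p-(t-z)^p-z$, so the sign in the displayed reformulation in the theorem statement is a typo (the paper's own proof has it the right way round).
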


\begin{proof}
$f(t)=t^p-t-d$ is irreducible if and only if $d\not=V_p(z)-z$ for all $z\in D_E$ by Lemma \ref{le:Am} (ii).
This is equivalent to $d\not=t^p-(t-z)^p-z$ for all $z\in D_E$ by \cite[(1.3.19)]{J96}.
 The remaining assertion follows from Lemma \ref{le:iso_order_p}.
\end{proof}

This generalizes \cite[Theorem 1.9.13]{J96} which  appears as the case  $d\in F$.

\begin{example}\label{ex:last}
Let $\delta$ be of degree $p$ with minimum polynomial $g(t)=t^p-t\in F[t].$
Let $x$ be an indeterminate and $\delta$ be the extension of $\delta$ to $K(x)$ via
$\delta(x)=0$, where ${\rm Const}(\delta)=F(x)$.
For all $f(t)=t^p-t-h(x)$ with $h(x)\in K(x)\setminus F(x)$,
$(K(x),\delta,h(x))$ is a unital nonassociative division algebra over $F(x)$
of dimension $p^{2}$, and a nonassociative cyclic extension of $K(x)$, see Example \ref{ex:1}.

Let $D$ be a central division algebra over $F$ of degree $n$ such that $D\otimes_F K$ is a division algebra.
Then
$$(K(x),\delta,h(x))\otimes_{F(x)}D_{F(x)}\cong D_{K(x)}[t;\delta]/D_{K(x)}[t;\delta]f(t)$$
is a nonassociative unital algebra over $F(x)$ of dimension $p^2 n^2$ and a division algebra if and only if
$$h(x)\not=V_p(z)-z$$
for all $z\in D_{K(x)}$, if and only if
$$h(x)\not=(t-z)^p-t^p-z$$
for all $z\in D_{K(x)}$.

\ignore{
note that for all $z\in K(x)$, the center of $D_{K(x)}$,
 we have $h(x)\not=(t-z)^p-t^p-z$, since $h(x)\not\in F(x)$ by our choice.
}

Its automorphism group has a cyclic subgroup of order $p$ generated by $G=G_{id,-1}$, so that
the algebra is a nonassociative cyclic
extension of $D_{K(x)}$ if it is division.

This can be seen as a generalization of \cite[Theorem 1.9.11]{J96}, where $h(x)=x$
in which case the algebra is division.
\end{example}


%
%
%


\end{document}